\newcommand{\sech}{\operatorname{sech}}
\newcommand{\defeq}{\stackrel{\rm{def}}{=}}
\newtheorem{theorem}{Theorem}[section]
\newtheorem{proposition}{Proposition}[section]
\newtheorem{corollary}[proposition]{Corollary}
\theoremstyle{remark}
\newtheorem{remark}[proposition]{Remark}
\numberwithin{equation}{section}
\title[Conservative gKdV scheme]{Arbitrarily high-order conservative schemes for the generalized Korteweg-de Vries equation}
\author{Kai Yang}
\date{\today}
\address{Florida International University}
\keywords{  generalized KdV, SAV, energy conservation, high-order conservative scheme, breathers}
\begin{document}
\maketitle
\begin{abstract}
This paper proposes a new class of arbitrarily high-order conservative numerical schemes for the generalized Korteweg-de Vries (KdV) equation. This approach is based on the scalar auxiliary variable (SAV) method. The equation is reformulated into an equivalent system by introducing a scalar auxiliary variable, and the energy is reformulated into a sum of two quadratic terms. Therefore, the quadratic preserving Runge-Kutta method will preserve { all the three invariants (momentum, mass and the reformulated energy) in the discrete time flow (assuming the spatial variable is continuous).}
 With the Fourier pseudo-spectral spatial discretization, the scheme conserves the first and third invariant quantities (momentum and energy) exactly in the { space-time full} discrete sense. The discrete mass possesses the precision of the spectral accuracy. { Our numerical experiment shows the great efficiency of this scheme in simulating the breathers for the mKdV equation.}
\end{abstract}


\section{Introduction}
This paper considers the numerical approximation of the generalized Korteweg–de Vries (gKdV) equation
\begin{align}\label{E:gKdV}
\begin{cases}
u_t=-(u_{xx}+\frac{1}{p}u^p)_x, \quad x \in \mathbb{R}, \,\, t>0 \\
u(x,0)=u_0,
\end{cases} 
\end{align}
where $p$ is a positive integer.  

When $p=2$, it is known as the KdV equation.
When $p=3$, it is the modified KdV (mKdV) equation. One appealing feature of these two cases is that they form the integrable systems and possess infinitely many integrals, which are invariant in time, see \cite{MGK1968}. This is accounted by reformulating the KdV equation into the Lax pairs \cite{Lax1968}.
When $p \geq 4$, it is known as the generalized KdV equation, which is not integrable. In general,
the equation \eqref{E:gKdV} conserves the following three invariant quantities:
\begin{align}
&I[u(t)] \defeq \int u(x,t) dx =I[u_0];   \label{E:momentum}\\
&M[u(t)] \defeq \int [u(x,t)]^2 dx =M[u_0];  \label{E:mass} \\
&E[u(t)] \defeq \int \left[ \frac{1}{2} \left( u_x(x,t)\right )^2-\frac{1}{p(p+1)} \left(u(x,t)\right)^{p+1} \right] dx = E[u_0], \label{E:energy}
\end{align} 
known as the momentum, mass and energy (Hamiltonian), respectively.

These evolution equations are among the simplest of a general class of models featuring nonlinear advection (the term $(\frac{1}{p}u^p)_x$ in this case). This family of equations arise as mathematical models for the propagation of physical waves in a wide variety of situations, such as shallow-water waves with weakly non-linear restoring forces, long internal waves in a density-stratified ocean, ion-acoustic waves in a plasma, acoustic waves on a crystal lattice, etc., e.g., see \cite{BBM1972},  \cite{JK1972}, \cite{Bona1981}, \cite{BPS1981}, \cite{Craig1987}, \cite{BCL2005}. The equations also attracted attention from the mathematical theory side. Analytical and numerical investigations include
well-posedness, \cite{Kato1983}, \cite{KPV1993};
soliton stability, \cite{BSS1987}, \cite{MM2001};
dispersion limit, \cite{GK2012}; 
and
singularity formations, \cite{KP2015}.
Nevertheless, there are still many open questions, and therefore, an efficient and  accurate numerical algorithm would be desirable for future investigations. 

Numerical studies of the gKdV equation trace back to 1970's, ranging from finite difference methods \cite{Goda1977}, \cite{LV2006}, \cite{CM2007}; finite element methods \cite{AW1982}, \cite{BDKM1995}, \cite{KM1990}; spectral methods \cite{FW1978}, \cite{GS2001}, \cite{HS1992}, \cite{MS2000}, \cite{Shen2003}; operator splitting methods \cite{HKR1999}, \cite{HKRT2011} and discontinuous Galerkin methods \cite{CS2008}, \cite{LY2006}, \cite{XS2007},  \cite{YHL2013}, \cite{LY2016}. Besides the order of accuracy and the stiffness from the term $u_{xxx}$, preserving the conservation laws \eqref{E:momentum}-\eqref{E:energy} is also a main concern in designing a numerical method for \eqref{E:gKdV}. Indeed, for conservative PDEs, numerical methods, which can preserve the corresponding invariants, are often advantageous: besides the accuracy, a conservative scheme can preserve good stability properties, especially in long-time simulations. 
On the other hand, to our best knowledge, most of the papers above are only capable of preserving one or, possibly, two quantities from \eqref{E:momentum}-\eqref{E:energy}, with the second order accuracy in time. 

The purpose of this paper is to present a numerical scheme for the gKdV equation that preserves all these three invariant quantities  \eqref{E:momentum}-\eqref{E:energy} exactly in the discrete time, together with an arbitrarily high order of temporal accuracy. 
This is achieved by applying the scalar auxiliary variable (SAV) approach. { The SAV approach, which was developed from the IEQ approach (\cite{YANG2016} and \cite{YZW2017}), was proposed for minimizing the free energy by gradient flows, aiming to keep the energy stable in the discrete time flow, see \cite{SXY2018}, \cite{SX2018} and convergence analysis in \cite{SXY2019}. 
The main difference between these two methods is that while the IEQ method is introducing an additional variable function from the nonlinear terms in the equation, the SAV method is introducing a scalar function from the potential part in the energy. Hence, when considering the spatial discretized system (assume $u$ is discretized into $N$ grid points or nodes), the IEQ method results in a $2N \times 1$ system, and the SAV approach results in an $(N+1) \times 1$ system, which reduces the computational cost significantly.
}
The SAV or IEQ methods are then applied to the Hamiltonian PDEs in \cite{CS2020} and  \cite{CWJ2021}. Inspired by the same idea, we reformulate the equation \eqref{E:gKdV} into an equivalent system by introducing an auxiliary variable. The reformulated system conserves the original momentum and mass, and also the modified energy, which is rewritten into the sum of two quadratic terms. Therefore, by the standard numerical ODE theory, the quadratic preserving (symplectic) Runge-Kutta methods will preserve all these three invariants exactly in the discrete time flow {(assume the space variable is continuous)}. The standard Fourier pseudo-spectral method is chosen for the spatial discretization. This spatial discretization preserves the momentum and energy exactly in the spatial discrete sense (assuming time is continuous), and consequently, the error of the discrete mass only comes from the spatial discretization, which is of spectral accuracy and usually unnoticeable in our numerical experiments.

This paper is organized as follows. In Section 2, we give the equivalent form of the reformulated gKdV equation \eqref{E:gKdV} and the modified energy \eqref{E:energy} based on the SAV approach. In Section 3, we show that the family of the quadratic preserving (symplectic) Runge-Kutta methods will preserve the momentum, mass and modified energy in the discrete time flow. In Section 4, we describe the spatial discretization.
We prove that the conservation laws \eqref{E:momentum} and \eqref{E:energy} hold in the spatial discrete sense. We also show the error of \eqref{E:mass} from this discretization. Combing with results in Section 3, we prove that the proposed scheme preserves the momentum and energy exactly in the { space-time} fully discretized { scheme in the computation}. In the meanwhile, we also give an error estimate of the discrete mass, which is $\sim C_d \cdot t$ for some constant $C_d$ coming from the error of the spatial discretization. Due to the high accuracy of the Fourier spectral method, the constant $C_d$ is generally on the order of $10^{-14}$, several orders smaller than the tolerance for solving the resulting nonlinear system, and thus, the discrete mass error is almost unnoticeable.
In Section 5, we first describe the numerical algorithm for solving the resulting nonlinear system from the symplectic Runge-Kutta method. { Next, we briefly describe the several existing algorithms for the gKdV equations, such as the modified Crank-Nicholson method, conventional SAV with Leap-Frog (SAV-LF) approach, the Strang-Splitting (SS) method from \cite{HKR1999}, and the 4th order modified exponential time differencing (mETDRK4) method (\cite{CM02} and \cite{KT05}). They will be used in this paper for comparison purpose.}
Finally, we list our numerical examples. Numerical results show the fulfillment of conservation laws and a high accuracy of the proposed scheme, {especially in simulating the breathers, which is a type of highly oscillatory non-dispersing pulse solutions for the mKdV equations.}


\medskip
{\flushleft \bf Acknowledgment:} The author is partially supported by the NSF grant DMS-1927258 (PI: Svetlana Roudenko). The author is thankful for Dr. Roudenko's helpful discussion, reading and remarks on the paper.

\section{Model reformulation by the SAV approach}
In this section, we reformulate the equation \eqref{E:gKdV} into an equivalent system, which possesses a modified energy function of a new variable. 

We first define the inner product for the real valued functions functions $f,g\in L^2(\mathbb{R}):$
\begin{align}\label{D:L2}
(f,g)\defeq \int_{\mathbb{R}} f(x)g(x)dx.
\end{align}
{Assume $u^{p+1} \in L^1_x([0,T])$.} Consider a new scalar variable 
$$v \defeq v(u,t)=\sqrt{(u^p,u)+C_0},$$ 
where $C_0$ is a large enough positive constant to prevent the expression $(u^p,u)+C_0$ under the square root to become negative during our simulation time $t \in [0,T]$. In Section 3, we will provide an adjustment process for $C_0$ when $v(t_m)$ is close to $0$ at some time $t=t_m$. Therefore, for now we only need to choose a constant $C_0$ at $t=t_m$ to make sure $v(t)>0$ for $t\in [t_m,t_{m+1}]$. This can be easily achieved, since we assume that the solution is well-posed (smooth in time) in $t \in [0,T]$.

The equation \eqref{E:gKdV} is then reformulated into the following system 
\begin{align}\label{E:gKdV sav}
\begin{cases}
u_t=-\left( u_{xx}+\frac{1}{p} \frac{u^p v}{\sqrt{(u^p,u)+C_0}} \right)_x \defeq f(u,v),\\
v_t=\frac{p+1}{2\sqrt{(u^p,u)+C_0}} ( u^p ,u_t) \defeq g(u,v),
\end{cases}
\end{align}
with the initial condition 
\begin{align*}
u(x,0)=u_0, \qquad v_0=\sqrt{( u_0^p,u_0)+C_0}.
\end{align*}

Since the modified system \eqref{E:gKdV sav} is identical to \eqref{E:gKdV} { when the spatial and time variables $(x,t)$ are continuous}, the modified system \eqref{E:gKdV sav} conserves the momentum, mass and modified energy in the form
\begin{align}
&I[u(t)] \defeq (u,1)\equiv I[u_0];  \label{E:momentum sav} \\
&M[u(t)] \defeq (u,u) \equiv M[u_0];  \label{E:mass sav} \\
&E[u(t), v(t)] \defeq -\frac{1}{2}(u_{xx},u)-\frac{1}{p(p+1)} (v^2 -C_0) \equiv E[u_0, v_0]. \label{E:energy sav}
\end{align}

\section{Temporal discretization}
In this section, we describe the temporal discretization of the scheme. We first show that the family of the symplectic Runge-Kutta (SRK) methods will conserve all three quantities \eqref{E:momentum sav} \eqref{E:mass sav} and \eqref{E:energy sav} in the discrete time flow. Then, we propose a strategy to adjust the constant $C_0$ in the computation, which makes the reformulated system \eqref{E:gKdV sav} valid all the time and keeps the energy invariant.  

\subsection{Symplectic Runge-Kutta method}
We first recall the $s$-stage collocation Runge-Kutta method. {Consider the IVP of the ODE in general form $${u}_t=f({u},t), \quad {u}(t_0)={u}_0.$$ Define $\tau$ to be the time step.
Let $c_1, \cdots, c_s$ be distinct real numbers (usually $0\leq c_i \leq 1$). The collocation polynomial ${u}(t)$ is a polynomial of degree $s$ satisfying 
$${u}_t(t_0+c_i \tau)=f({u}(t_0+c_i \tau), t_0+c_i \tau),$$
and the numerical solution is defined by ${u}_1={u}(t_0+\tau)$. 

From \cite{GS1969}, we know that the $s$-stage collocation method is equivalent to the $s$-stage Runge-Kutta method. Indeed, consider
$$k_i:={u}_t(t_0+c_i \tau)=f({u}(t_0+c_i \tau),t_0+c_i \tau).$$ 
Letting $l_i(z)=\Pi_{l \neq i} (z-c_l)/(c_i-c_l)$ be the Lagrange polynomial, the Lagrange interpolation formula gives us 
$${u}_t(t_0+z \tau)=\sum_{j=1}^s k_j l_j(z).$$
Integrating from $0$ to $c_i$ for each $i$ yields
$$u(t_0+c_i\tau)=u_0+\tau \sum_{j=1}^s k_j \int_0^{c_i}l_j(z)dz.$$
Denote $a_{ij}=\int_0^{c_i}l_j(z)dz$, $b_i=\int_0^1l_i(z)dz$ and the intermediate values $u_i=u(t_0+c_i\tau)$. We have the Runge-Kutta formulation
\begin{align*}
u_i=u_0+\sum_{j=1}^s k_j a_{ij},
\end{align*}
and, the solution is updated by
\begin{align*}
u(t_0+\tau)=u_0+\sum_{i=1}^s k_i b_i.
\end{align*}
We rewrite the coefficients $\mathbf{A}=(a_{ij})$, $\mathbf{b}=(b_1, b_2,\cdots,b_s)$ and $\mathbf{c}=(c_1, c_2,\cdots,c_s)^T$ in the Butcher's Tableaus (\cite{Butcher1964}):
\[
\begin{array}
{c|c}
\mathbf{c}&
\mathbf{A}\\
\hline
& \mathbf{b}
\end{array}.
\]
For example, we list three commonly used Runge-Kutta methods in the Butcher's Tableaus in Table \ref{T:RK Butcher}. $c_i$'s are chosen from the Gaussian-Legendre collocation points. Thus, they are known as the $s$-stage Gaussian-Legendre Runge-Kutta method, namely IRK2, IRK4 and IRK6 from their order of accuracy when $s=1,2,3$, respectively.
There are many other types of collocation Runge-Kutta methods, we refer the interested reader to \cite{Cooper1987}, \cite{Sanz1988} and \cite{SA1991}.
\begin{table}[h]
\renewcommand\arraystretch{1.0}
\centering
\begin{subtable}[t]{0.3\textwidth}
\[
\begin{array}{c|c}

\frac{1}{2}   &   \frac{1}{2}   \\ \hline
    &   1   \\ 
\end{array}
\]
\caption{IRK2}
\end{subtable}
     \hfil
\begin{subtable}[t]{0.3\textwidth}
\[
\begin{array}{c|cc}
   
\frac{1}{2}-\frac{1}{6}\sqrt{3}   &   \frac{1}{4}  & \frac{1}{4}-\frac{1}{6}\sqrt{3}  \\ 
\frac{1}{2}+\frac{1}{6}\sqrt{3}   &   \frac{1}{4}+\frac{1}{6}\sqrt{3}  & \frac{1}{4}  \\ \hline
    &   \frac{1}{2} & \frac{1}{2}   \\ 
\end{array}
\]
\caption{IRK4}
\end{subtable}
    \hfil

\begin{subtable}[t]{0.3\textwidth}
\[
\begin{array}{c|ccc}
   
\frac{1}{2}-\frac{\sqrt{15}}{10}   &   \frac{5}{36}  & \frac{2}{9}-\frac{\sqrt{15}}{15}&\frac{5}{36}-\frac{\sqrt{15}}{30} \\ 

\frac{1}{2}   &   \frac{5}{36}+\frac{\sqrt{15}}{24}  & \frac{2}{9} &\frac{5}{36}-\frac{\sqrt{15}}{24} \\ 

\frac{1}{2}+\frac{\sqrt{15}}{10}   &   \frac{5}{36} + \frac{\sqrt{15}}{30}  & \frac{2}{9}+\frac{\sqrt{15}}{15}&\frac{5}{36} \\ 

 \hline
    &   \frac{5}{18} & \frac{4}{9} & \frac{5}{18}   \\ 
\end{array}
\]
\caption{IRK6}
\end{subtable}

\caption{\label{T:RK Butcher}Butcher's Tableaus for the $s$-stage Gaussian-Legendre collocation Runge-Kutta methods with $s=1,2,3$.}
\end{table}

Now, we adapt the collocation Runge-Kutta methods into the equation \eqref{E:gKdV sav}. 
}
Denote the semi-discretization in time $u^m \approx u(x,t_m)$, $v^m \approx v(t_m)$. For simplicity, we rewrite the equation \eqref{E:gKdV sav} as the system
\begin{align}\label{E:gKdv f g}
\begin{cases}
u_t=f(u,v),\\
v_t=g(u,v).
\end{cases}
\end{align}
Denote the intermediate values $U_i=u(t_m+c_i\tau)$ and $V_i= v(t_m+c_i\tau)$ to be the solution satisfying \eqref{E:gKdV sav} at the time $t=t_m+ c_i \tau$.
Then, $U_i$ and $V_i$ can be obtained from the relation
\begin{align}\label{E:RK U V}
U_i={u}^m+\tau \sum_{j=1}^{s} a_{ij}{f}_j, \quad
V_i={v}^m+\tau \sum_{j=1}^{s} a_{ij}{g}_j, 
\end{align}
where ${f}_i=f(U_i,V_i)$ and ${g}_i=g(U_i,V_i)$. Then, ${u}^{m+1}$ and $v^{m+1}$ are updated by 
\begin{align}\label{E:RK u v}
{u}^{m+1}={u}^m+\tau \sum_{i=1}^{s} b_i{f}_i, \quad
v^{m+1}=v^m+\tau \sum_{i=1}^{s} b_i {g}_i.
\end{align}

Denote
$I^{m}=I[u^m], \,M^{m}=M[u^m]$ and $E^{m}=E[u^m,v^m]$
to be the invariant quantities from \eqref{E:momentum sav}-\eqref{E:energy sav} at the time $t=t_m$. We have the following theorem.

\begin{theorem}\label{T:SRK conservation}
The $s$-stage collocation Runge-Kutta method preserves the momentum \eqref{E:momentum sav}, i.e.,
$$I^{m+1}=I^{m}.$$
Furthermore, the $s$-stage symplectic Runge-Kutta method (which is also called the quadratic preserving Runge-Kutta method) satisfies 
\begin{align}\label{E:SRK ab}
b_ia_{ij}+b_ja_{ji}-b_ib_j=0, \qquad \mbox{for} \quad i,j=1,\cdots,s,
\end{align} 
conserves the mass \eqref{E:mass sav} and energy \eqref{E:energy sav} exactly in the discrete time flow, i.e.,
$$M^{m+1}=M^{m}, \quad \mbox{and} \quad E^{m+1}=E^{m}.$$
\end{theorem}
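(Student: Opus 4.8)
The plan is to separate the three conservation statements according to the algebraic nature of the functionals: $I[u]=(u,1)$ is \emph{linear} in $u$, whereas $M[u]=(u,u)$ and the modified energy $E[u,v]$ (up to the additive constant $\tfrac{C_0}{p(p+1)}$) are \emph{quadratic} forms in the pair $(u,v)$. Linear invariants are preserved by every Runge--Kutta step and quadratic ones by the symplectic methods, so the theorem reduces to three verifications. For the momentum, I would note that $f(u,v)$ in \eqref{E:gKdV sav} is an exact $x$-derivative, hence $(f(u,v),1)=0$ identically on $\mathbb R$, so $(f_i,1)=0$ for every stage; pairing the update \eqref{E:RK u v} with the constant function $1$ gives $I^{m+1}=I^m+\tau\sum_i b_i(f_i,1)=I^m$, with no requirement on $\mathbf A,\mathbf b$.

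For mass and energy I would first establish two ``orthogonality'' identities, valid for all admissible $(u,v)$,
\[(u,f(u,v))=0,\qquad -(u_{xx},f(u,v))-\tfrac{2}{p(p+1)}\,v\,g(u,v)=0,\]
both by integration by parts on $\mathbb R$ (using decay at infinity). For the first, $(u,f)=(u_x,u_{xx})+\tfrac{v}{p\sqrt{(u^p,u)+C_0}}(u_x,u^p)$ and each summand vanishes. For the second, an integration by parts shows that $-(u_{xx},f)$ and $-\tfrac{2}{p(p+1)}vg$ are multiples of $(u_{xxx},u^p)$ with opposite coefficients $\mp\tfrac{v}{p\sqrt{(u^p,u)+C_0}}$, so they cancel; here the definition of $g$ with $u_t$ replaced by $f(u,v)$ is exactly what produces the cancellation. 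In operator form: writing the quadratic part of $E$ as $Q(u,v)=\langle (u,v),S(u,v)\rangle$ with $S(u,v)=(-\tfrac12 u_{xx},-\tfrac{1}{p(p+1)}v)$ and the pairing $\langle (u_1,v_1),(u_2,v_2)\rangle=(u_1,u_2)+v_1v_2$ (so that $S$ is symmetric after integration by parts), the second identity says precisely $\langle (f,g),S(u,v)\rangle=0$, and $(u,f)=0$ is its analogue for $M(u)=(u,u)$; equivalently $M$ and $E$ are first integrals of the continuous-time system \eqref{E:gKdv f g}.

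Then I would run the classical symplectic-Runge--Kutta computation. With $y^m=(u^m,v^m)$, $Y_i=(U_i,V_i)$, $F_i=(f_i,g_i)$, expanding $Q(y^{m+1})$ by bilinearity from \eqref{E:RK u v}, substituting $y^m=Y_i-\tau\sum_j a_{ij}F_j$ from \eqref{E:RK U V} into the term linear in $\tau$, and collecting gives
\[Q(y^{m+1})-Q(y^m)=2\tau\sum_{i}b_i\langle F_i,SY_i\rangle+\tau^2\sum_{i,j}\bigl(b_ib_j-2b_ia_{ij}\bigr)\langle F_i,SF_j\rangle.\]
Symmetrizing the double sum in $i,j$ and invoking the symplecticity condition \eqref{E:SRK ab} kills the $\tau^2$ term, while $\langle F_i,SY_i\rangle=0$ is the second orthogonality identity evaluated at the stage pair $(U_i,V_i)$, killing the $\tau$ term; hence $Q(y^{m+1})=Q(y^m)$. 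Taking $Q=M$ (the $v$-independent form $(u,u)$, for which the relevant identity is $(U_i,f_i)=0$) yields $M^{m+1}=M^m$, and taking $Q$ to be the quadratic part of $E$ yields $E^{m+1}=E^m$, the constant $\tfrac{C_0}{p(p+1)}$ being unchanged within a step.

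I expect the only genuinely computational step to be the second orthogonality identity: one must carry the prefactor $v/\sqrt{(u^p,u)+C_0}$ through the integrations by parts and check that the two $(u_{xxx},u^p)$ contributions indeed cancel. Everything else, namely the momentum argument and the quadratic-invariant bookkeeping built on \eqref{E:SRK ab}, is routine once that identity and $(u,f)=0$ are in place.
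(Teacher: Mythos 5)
Your proposal is correct and follows essentially the same route as the paper: momentum is a linear invariant preserved by any collocation Runge--Kutta method via $(f_i,1)=0$, while mass and the modified energy are quadratic forms whose increment reduces, after substituting $y^m=Y_i-\tau\sum_j a_{ij}F_j$, to a $\tau$-term killed by the stage identities $(U_i,f_i)=0$ and $(\partial_x^2 f_i,U_i)+\tfrac{2}{p(p+1)}V_ig_i=0$ plus a $\tau^2$-term killed by \eqref{E:SRK ab}. The only (welcome) refinement is that you verify those stage identities as algebraic consequences of the structure of $f$ and $g$ via integration by parts, valid for arbitrary admissible $(U_i,V_i)$, whereas the paper appeals to the continuous conservation laws ``at each collocation time''; this is the same idea, just stated more carefully.
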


\begin{proof}
The proof is standard, similar to \cite{Cooper1987}, \cite{Sanz1988} and \cite{LZQS2019}.
Putting \eqref{E:RK u v} in \eqref{E:momentum sav} yields
\begin{align*}
I^{m+1}-I^m=(u^{m+1}-u^m,1)= \tau(\sum_{i=1}^s b_i {f}_i,1)=\tau \sum_{i=1}^s b_i ({f}_i,1)=0.
\end{align*}
The last equality comes from the conservation law $(f_i,1)=0$ from \eqref{E:gKdV sav} at each collocation time $t=t_m+c_i \tau$.

Similarly, putting \eqref{E:RK u v} in \eqref{E:mass sav}, and using \eqref{E:RK U V}, yields
\begin{align*}
M^{m+1}&=({u}^{m+1},{u}^{m+1})=({u}^{m}+\tau \sum_{i=1}^{s} b_i{f}_i,{u}^{m}+\tau \sum_{i=1}^{s} b_i{f}_i)\\
&=({u}^m,{u}^m)- 2\tau \sum_{i=1}^s b_i({f_i},\tau \sum_{j=1}^s a_{ij}{f}_j-U_i)+ \tau^2 \sum_{i,j=1}^s b_ib_j (f_i, f_j) \\
&= M^m+2  \tau \sum_{i=1}^s b_i({f}_i,U_i)- \tau^2 \sum_{i,j=1}^s(b_ia_{ij}+b_ja_{ji}-b_ib_j)({f}_i, {f}_j)=M^m,
\end{align*}
since $(f_i,U_i)=(u_t,u)=\frac{1}{2} \frac{d}{dt}M=0$ at each collocation time $t=t_m+c_i \tau$ from the conservation law in \eqref{E:mass sav}.

Similarly, to show $E^{m+1}=E^m$, we have
\begin{align*}
E^{m+1}&=-\frac{1}{2}(u_{xx}^{m+1},{u}^{m+1})-\frac{1}{p(p+1)}((v^{m+1})^2-C_0) \\
&=-\frac{1}{2}(({u}^{m}+\tau \sum_{i=1}^{s} b_i{f}_i)_{xx},{u}^{m}+\tau \sum_{i=1}^{s} b_i{f}_i)-\frac{1}{p(p+1)}((v^m+\tau \sum_{i=1}^{s} b_i {g}_i)^2-C_0)\\
&=-\frac{1}{2}(u_{xx}^m,{u}^m)-\frac{1}{p(p+1)} \left((v^m)^2-C_0 \right)
+ \tau \sum_{i=1}^s b_i (\partial_x^2 f_i, U_i)
+\tau \sum_{i=1}^s b_i \left(\frac{2}{p(p+1)}V_ig_i \right)\\
&+\tau^2 \sum_{i,j=1}^s (b_ia_{ij}+b_ja_{ji}-b_ib_j)\left(( \partial_x^2 f_i,f_j)+{\frac{1}{p(p+1)}} g_ig_j \right)\\
&=E^m
\end{align*}

by $(\partial_x^2 f_i, U_i)+ \frac{2}{p(p+1)}V_ig_i=0$ for each $i$, which follows from differentiating the conservation law \eqref{E:energy sav} with respect to $t$. This completes the proof.
\end{proof}

From the theorem, we know that any arbitrary order of collocation Runge-Kutta method will conserve the momentum \eqref{E:momentum sav}, the symplectic Runge-Kutta method will conserve the mass \eqref{E:mass sav} and modified energy \eqref{E:energy sav} in the discrete time flow. Therefore, an arbitrarily high order time integrator can be constructed.
In fact, from the proof of Theorem \ref{T:SRK conservation}, reformulating the original equation \eqref{E:gKdV} into the system \eqref{E:gKdV sav} is only for the purpose of the energy conservation.

\subsection{A $C_0$ adjustment process}
The key part for this SAV approach is to make sure that the term $\int u^{p+1} dx +C_0$ is positive during the computational time $t \in [0,T]$. When considering the mKdV ($p=3$) case, $\int u^{p+1} dx +C_0>0$ will automatically hold for any positive constant $C_0$, since $p+1=4$ is even. This is also true for considering the nonlinear Schr\"odinger (NLS) equations or the Gross-Pitaevskii (GP) equations in \cite{CWJ2021} and \cite{LZQS2019}, as the authors there only consider the cubic nonlinearity $|u|^{2}u$. However, when considering the KdV ($p=2$) case, {a prior bound for $\int u^{p+1}(t) dx$ is needed for all $t \in [0,T]$. Otherwise, $\int u^{p+1}(t_m) dx+C_0<0$ may happen at some time $t=t_m$ (though $\int u^{p+1}(t_0) dx+C_0>0$ is satisfied in the beginning), and result in the failure of the algorithm. Instead of choosing $C_0$ carefully in the beginning of the simulation, we introduce a $C_0$ adjustment process when the value $\int u^{p+1}(t_m) dx+C_0$ is approaching $0$, and thus, we only need to choose the constant $C_0$ such that $\int u^{p+1}(t_0) dx+C_0>0$ in the beginning of the computation.
}

Suppose at $t=t_m$, $\int (u^m)^{p+1} dx +C_0<Tol$, where $Tol$ is a given positive number (e.g., $Tol=5$). Then, we choose another constant $\tilde{C_0}$ such that $\int (u^m)^{p+1} dx +\tilde{C_0}> Tol$. For example, we can take $\tilde{C_0}=10-\int (u^m)^{p+1} dx$, which leads to our new $\tilde{v}^m \approx \sqrt{10}$. Then, by using $E[u^m,v^m]=E[u^m,\tilde{v}^m]$ from \eqref{E:energy sav}, we have our new $\tilde{v}^m$
\begin{align}\label{E:v new}
\tilde{v}^m=\sqrt{(v^m)^2+\tilde{C_0}-C_0}.
\end{align}
Finally, we substitute the $v^m$ and $C_0$ in \eqref{E:KdV space} with $\tilde{v}^m$ and $\tilde{C_0}$, and then, continue with the time evolution for $t=t_{m+1},t_{m+2}, \cdots$.
 
\begin{remark}
Note that $v^2=\int u^{p+1} dx +C_0$ holds only at the collocation points $t=t_m+\tau c_i$ for each $i=1,2,\cdots,s$ in $t\in [t_m,t_{m+1}]$. However, the constant $c_s$ may not necessarily equal to $0$ or $1$, e.g., see Table \ref{T:RK Butcher}, which means $v^2=\int u^{p+1} dx +C_0$ does not hold at $t_m$ in the discrete time flow.
Therefore, the new $\tilde{v}^m$ can only be evaluated by \eqref{E:v new} to keep the discrete energy \eqref{E:energy sav} invariant. { In the actual computation, due to the dispersive nature of the negative data (e.g., Example 3), the $C_0$ adjustment process has not been executed yet.}
\end{remark}

\section{Spatial discretization}
In this section, we describe the spatial discretization. We show that the Fourier pseudo-spectral method will keep the momentum and energy invariant under such spatial discretization. Together with the temporal discretization in the previous section, the proposed scheme will preserve the discrete momentum and energy exactly in the discrete time flow. Moreover, we give an upper bound for the error of the discrete mass, which is caused by the spatial discretization.

Without loss of generality, we truncate the whole space into a bounded domain $x \in [-L,L]$ for sufficiently large $L$ with periodic boundary conditions at the boundary.
We use the Fourier pseudo-spectral method to discretize the space because of the high-order accuracy and the application of the fast Fourier transform (FFT) (see. e.g., \cite{STL2011} and \cite[Chapter 3]{Tr2000}).

Now, we briefly introduce the spatial discretization strategy. Let $N$ be the number of nodes and $h=2L/N$ to be the spatial step size. Denote $x_j=hj$ and $u_j \approx u(x_j)$ to be the spatial discretization for $j=-N/2, \cdots,N/2-1$. By applying the standard discrete Fourier expansion, one obtains
\begin{align}\label{E:DFT}
u_j= \frac{1}{\sqrt{N}} \sum_{k=-N/2}^{N/2-1} \hat{u}_k e^{i2k\pi  x_j/L}, \quad \mbox{where} \quad \hat{u}_k=\frac{1}{\sqrt{N}} \sum_{j=-N/2}^{N/2-1} u_j e^{-i2k\pi x_j/L}.
\end{align}
By introducing vectors $\mathbf{u}=(u_{-N/2}, \cdots , u_{N/2-1})^T$, $\mathbf{\hat{u}}=(\hat{u}_{-N/2}, \cdots , \hat{u}_{N/2-1})^T$, and the discrete Fourier transform matrices
$$\mathbf{F}_{k,j}=\frac{1}{\sqrt{N}} e^{-i2k \pi x_j/L}, \quad \mathbf{F}^{-1}_{j,k}= \frac{1}{\sqrt{N}} e^{i2k \pi x_j/L}, \quad -N/2 \leq j,k \leq N/2-1,$$
one can write $\mathbf{u}$ and $\mathbf{\hat{u}}$ in a matrix form as $\mathbf{u}=\mathbf{F}^{-1} \mathbf{\hat{u}}$ and $\mathbf{\hat{u}}=\mathbf{Fu}$. Also note that $\mathbf{F}^{-1}=(\mathbf{\bar{F}})^T.$
Using the properties of the Fourier transform, one can easily obtain
\begin{align*}
u'_j=\frac{i 2\pi}{L} \sum_{k=-N/2}^{N/2-1} k \hat{u}_k e^{i 2k\pi  x_j/L}, \quad
u''_j=-\left(\frac{ 2\pi}{L} \right)^2 \sum_{k=-N/2}^{N/2-1} k^2 \hat{u}_k e^{i 2k\pi  x_j/L}.
\end{align*}
Thus, we have the differential matrices $\mathbf{D_1}$ and $\mathbf{D_2}$:
\begin{align}\label{E:fft dmatrix}
\mathbf{u}'=\mathbf{D_1u}=\mathbf{\bar{F}^T \Lambda_1 F u}, \quad \mbox{and} \quad \mathbf{u}''=\mathbf{D_2u}=\mathbf{\bar{F}^T \Lambda_2 F u},
\end{align}
where $\mathbf{\Lambda_1}=i \mathbf{\Lambda}$, $\mathbf{\Lambda_2}=\mathbf{\Lambda^2_1}$, and $\mathbf{\Lambda}= \frac{2\pi}{L}\mbox{diag}(\frac{-N}{2}, \cdots,\frac{N-1}{2})$. We note that $\mathbf{D_{1,2}}$ are real matrices. Moreover, the matrix $\mathbf{D_2}$ is symmetric. The matrix $\mathbf{D_1}$ is antisymmetric ($\mathbf{D_1}=-\mathbf{D_1^T}$)  and circulant, i.e., $\mathbf{D_1}_{(j,k)}=d_{j-k}$. Also note that both the row sum and the column sum of the matrix $\mathbf{D_1}$ equal to $0$, i.e., for each fixed $k$ or $j$,
\begin{align}\label{E:D1 row sum}
\sum_{j=-N/2}^{N/2-1}d_{j,k}=\sum_{k=-N/2}^{N/2-1}d_{j,k}=0.
\end{align}
This has been studied in many literatures, see e.g., \cite{GCW2014} and \cite[Chapter 3]{Tr2000}.
We remark here that in the actual computation, $\mathbf{F}$ and $\mathbf{F}^{-1}$ are not explicitly assembled, instead, the fast Fourier transform (FFT) is typically used in the computation.

For simplicity, if $\mathbf{u}$ is a vector, we denote $\mathbf{u^p}=(u_{-N/2}^p,u_{-N/2+1}^p,\cdots u_{N/2-1}^p)^T$ to be the pointwise power. Then, we define the discrete norms. Given two vectors $\mathbf{u},\mathbf{v}$, define
\begin{align}\label{D:discrete norm}
(\mathbf{u},\mathbf{v})_h= h \mathbf{\bar{v}}^T\mathbf{u}= h \sum_{j=-N/2}^{N/2-1} u_j \bar{v}_j,\quad \| \mathbf{u} \|_h=\sqrt{(\mathbf{u},\mathbf{u})_h}.
\end{align}
Note that when $\mathbf{u},\mathbf{v} \in \mathbb{R}$, $(\mathbf{u},\mathbf{v})_h=(\mathbf{v},\mathbf{u})_h$. It is easy to see that $(\mathbf{D_2u},\mathbf{u})_h= (\mathbf{u},\mathbf{D_2u})_h  \in \mathbb{R}$, since $\mathbf{D_2}$ is real and symmetric, and $(\mathbf{D_1u},\mathbf{u})_h= (\mathbf{u},\mathbf{D_1u})_h=0$, since $\mathbf{D_1}$ is antisymmetric.

Denote $v_h$ to be the spatial semi-discretization for $v(t)$, i.e., $v_h \approx v(t)$. Then, the system of equations \eqref{E:gKdV sav} is discretized into the following system
\begin{align}\label{E:KdV space}
\begin{cases}
\mathbf{u}_t=-\mathbf{D_1}\left( \mathbf{D_2u}+\frac{1}{p}\frac{\mathbf{u^p}v_h}{\sqrt{(\mathbf{u^p},\mathbf{u})_h+C_0}} \right) \equiv f(\mathbf{u},v) ,\\
(v_h)_t=\frac{p+1}{2\sqrt{(\mathbf{u^p},\mathbf{u})_h+C_0} } ( \mathbf{u^p},\mathbf{u}_t )_h \equiv g(\mathbf{u},v) .
\end{cases}
\end{align}
 Note that $\mathbf{u}$ and $f(\mathbf{u},v_h)$ are an $N\times 1$ vectors, and $v_h$ and $g(\mathbf{u},v_h)$ are scalars.

Now, the spatial discrete momentum, mass and energy are defined as follows
\begin{align}
& I_h(\mathbf{u})=(\mathbf{u},\mathbf{1})_h; \label{D:DI S}\\
& M_h(\mathbf{u})=(\mathbf{u},\mathbf{u})_h; \label{D:DM S}\\
& E_h(\mathbf{u},v_h)=-\frac{1}{2} (\mathbf{D_2u},\mathbf{u})_h-\frac{1}{p(p+1)}(v_h^2-C_0) \label{D:DE S},
\end{align}
where $\mathbf{1}=(1,1, \cdots,1)^T$ is an $N\times 1$ vector.
We next obtain the conservation of the discrete momentum and energy, together with the error on the discrete mass.
\begin{theorem}\label{T:DM DE dt} The Fourier pseudo-spectral discretization to the equation \eqref{E:KdV space} leads to the following properties:
\begin{align}\label{E:DM DE dt}
\frac{d}{dt} I_h(\mathbf{u})=0   ; \quad \frac{d}{dt} E_h(\mathbf{u},v_h)=0.
\end{align}
Moreover,
\begin{align}\label{E:DMass dt}
\frac{d}{dt} M_h(\mathbf{u})=-\frac{2h}{p}\mathbf{u^TD_1u^p}.
\end{align}
\end{theorem}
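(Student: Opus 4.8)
The plan is to verify the three identities in \eqref{E:DM DE dt} and \eqref{E:DMass dt} by differentiating the discrete functionals \eqref{D:DI S}--\eqref{D:DE S} in time and substituting the semi-discrete system \eqref{E:KdV space}, exploiting the algebraic structure of the differentiation matrices $\mathbf{D_1}$ and $\mathbf{D_2}$ recorded just before the theorem. For the momentum, I would compute $\frac{d}{dt}I_h(\mathbf{u}) = (\mathbf{u}_t,\mathbf{1})_h = h\,\mathbf{1}^T\mathbf{u}_t$ and note that $\mathbf{u}_t$ is $-\mathbf{D_1}$ applied to some vector $\mathbf{w}$; since every column sum of $\mathbf{D_1}$ vanishes by \eqref{E:D1 row sum}, we get $\mathbf{1}^T\mathbf{D_1}\mathbf{w}=0$, hence $\frac{d}{dt}I_h=0$. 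Equivalently one can use antisymmetry: $\mathbf{1}^T\mathbf{D_1}\mathbf{w} = -(\mathbf{D_1}\mathbf{1})^T\mathbf{w}=0$ because $\mathbf{D_1}\mathbf{1}=\mathbf0$ (row sums vanish).

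For the mass, differentiate $M_h(\mathbf{u})=(\mathbf{u},\mathbf{u})_h = h\,\mathbf{u}^T\mathbf{u}$ to get $\frac{d}{dt}M_h = 2h\,\mathbf{u}^T\mathbf{u}_t = -2h\,\mathbf{u}^T\mathbf{D_1}\!\left(\mathbf{D_2u}+\frac1p\frac{\mathbf{u^p}v_h}{\sqrt{(\mathbf{u^p},\mathbf{u})_h+C_0}}\right)$. The first piece $\mathbf{u}^T\mathbf{D_1}\mathbf{D_2u}$ vanishes: $\mathbf{D_1}$ is antisymmetric and $\mathbf{D_2}$ symmetric, and since $\mathbf{D_1}$ and $\mathbf{D_2}$ are both Fourier multipliers they commute, so $\mathbf{u}^T\mathbf{D_1D_2u} = \mathbf{u}^T\mathbf{D_2D_1u} = (\mathbf{D_2u})^T(\mathbf{D_1u})$, and this equals its own negative by antisymmetry of $\mathbf{D_1}$ together with symmetry of $\mathbf{D_2}$ — more directly, $(\mathbf{D_1D_2u},\mathbf{u})_h$ is skew because $\mathbf{D_1D_2}$ is antisymmetric (product of a symmetric and an antisymmetric commuting pair), hence zero. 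The surviving term is $-\frac{2h}{p}\cdot\frac{v_h}{\sqrt{(\mathbf{u^p},\mathbf{u})_h+C_0}}\,\mathbf{u}^T\mathbf{D_1}\mathbf{u^p}$, and at this point I must use that the scalar factor $v_h/\sqrt{(\mathbf{u^p},\mathbf{u})_h+C_0}$ equals $1$ along solutions. That is where the $v_h$-equation enters: from $(v_h)_t = \frac{p+1}{2\sqrt{(\mathbf{u^p},\mathbf{u})_h+C_0}}(\mathbf{u^p},\mathbf{u}_t)_h$ one checks that $\frac{d}{dt}\!\left(v_h^2 - (\mathbf{u^p},\mathbf{u})_h - C_0\right) = 2v_h(v_h)_t - (p+1)(\mathbf{u^p},\mathbf{u}_t)_h = 0$, using $\frac{d}{dt}(\mathbf{u^p},\mathbf{u})_h = (p+1)(\mathbf{u^p},\mathbf{u}_t)_h$; combined with the initial condition $v_h(0)^2 = (\mathbf{u}_0^p,\mathbf{u}_0)_h + C_0$ this gives $v_h^2 = (\mathbf{u^p},\mathbf{u})_h + C_0$ for all $t$, hence the scalar factor is $1$, yielding \eqref{E:DMass dt}.

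For the energy, differentiate $E_h(\mathbf{u},v_h) = -\frac12(\mathbf{D_2u},\mathbf{u})_h - \frac1{p(p+1)}(v_h^2-C_0)$. The first term contributes $-(\mathbf{D_2u},\mathbf{u}_t)_h = -h(\mathbf{D_2u})^T\mathbf{u}_t = h(\mathbf{D_2u})^T\mathbf{D_1}\!\left(\mathbf{D_2u}+\frac1p\frac{\mathbf{u^p}v_h}{\sqrt{(\mathbf{u^p},\mathbf{u})_h+C_0}}\right)$; the $(\mathbf{D_2u})^T\mathbf{D_1}(\mathbf{D_2u})$ part is zero by antisymmetry of $\mathbf{D_1}$, leaving $\frac{h}{p}\frac{v_h}{\sqrt{(\mathbf{u^p},\mathbf{u})_h+C_0}}(\mathbf{D_2u})^T\mathbf{D_1}\mathbf{u^p}$. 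The second term contributes $-\frac{2}{p(p+1)}v_h(v_h)_t = -\frac{1}{p\sqrt{(\mathbf{u^p},\mathbf{u})_h+C_0}}v_h\,(\mathbf{u^p},\mathbf{u}_t)_h = \frac{h}{p}\frac{v_h}{\sqrt{(\mathbf{u^p},\mathbf{u})_h+C_0}}(\mathbf{u^p})^T\mathbf{D_1}\mathbf{D_2u}$ after substituting $\mathbf{u}_t = -\mathbf{D_1}(\mathbf{D_2u}+\cdots)$ into $(\mathbf{u^p},\mathbf{u}_t)_h$ — the cross term $(\mathbf{u^p})^T\mathbf{D_1}\mathbf{u^p}$ again vanishes by antisymmetry. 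Adding the two surviving pieces, $(\mathbf{D_2u})^T\mathbf{D_1}\mathbf{u^p} + (\mathbf{u^p})^T\mathbf{D_1}\mathbf{D_2u} = (\mathbf{D_2u})^T\mathbf{D_1}\mathbf{u^p} - (\mathbf{u^p})^T\mathbf{D_1}^T\mathbf{D_2u} = 0$ by antisymmetry of $\mathbf{D_1}$, so $\frac{d}{dt}E_h = 0$. Note this computation does \emph{not} require the scalar factor to equal $1$, so it holds for the reformulated system regardless of the $C_0$ adjustment.

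The only genuine subtlety — the "hard part" — is the mass identity: one must be careful that the factor $v_h/\sqrt{(\mathbf{u^p},\mathbf{u})_h+C_0}$ is legitimately $1$, which is not an algebraic identity but a consequence of the $v_h$-ODE plus the initial condition, exactly as in the continuous case. Everything else is a bookkeeping exercise in the symmetry/antisymmetry and commutativity (being Fourier multipliers) of $\mathbf{D_1},\mathbf{D_2}$ and the vanishing row/column sums \eqref{E:D1 row sum}; care is only needed to track which inner-product slot carries the conjugate/transpose, though since all quantities here are real this is immaterial.
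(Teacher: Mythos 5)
Your proposal is correct and follows essentially the same route as the paper: differentiate the discrete functionals, substitute the semi-discrete system \eqref{E:KdV space}, and use the vanishing row/column sums \eqref{E:D1 row sum}, the antisymmetry of $\mathbf{D_1}$, and the symmetry and commutativity of $\mathbf{D_2}$ with $\mathbf{D_1}$. The only (minor) differences are that you keep the scalar factor $v_h/\sqrt{(\mathbf{u^p},\mathbf{u})_h+C_0}$ explicit in the energy computation and observe it cancels without needing to equal $1$, whereas the paper sets it to $1$ outright, and you supply the short ODE argument for why that factor is $1$ in continuous time, which the paper merely asserts.
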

\begin{proof}
Note that $I_h \in \mathbb{R}$, thus,
\begin{align*}
\frac{d}{dt} I_h(\mathbf{u})= (\mathbf{u}_t,\mathbf{1})_h =(\mathbf{1},\mathbf{u}_t).
\end{align*}
We note that from \eqref{E:D1 row sum}, we have
$$\mathbf{1^TD_1}=\left(\sum_{k=-N/2}^{N/2-1}d_{-N/2,k}, \sum_{j=-N/2}^{N/2-1}d_{-N/2+1,k},\cdots,\sum_{j=-N/2}^{N/2-1}d_{N/2-1,k} \right)=\mathbf{0^T}.$$
Therefore, substitute $\mathbf{u}_t$ by \eqref{E:KdV space} yields
\begin{align}\label{E:DI dt decompose}
(\mathbf{u}_t,\mathbf{1})_h=h \mathbf{1^T}\mathbf{u}_t=-h \mathbf{1^T}\mathbf{D_1}\left( \mathbf{D_2u}+\frac{1}{p}\frac{\mathbf{u^p}v_h}{\sqrt{(\mathbf{u^p},\mathbf{u})_h+C_0}} \right)=0.
\end{align}

Next, we prove $\frac{d}{dt}E_h=0$, provided by $v_h(v_h)_t=\frac{p+1}{2}(\mathbf{u^p},\mathbf{u}_t )_h$ from \eqref{E:KdV space}, i.e.,
\begin{align}\label{E:energy c dt}
 \frac{d}{dt}E_h&=-(\mathbf{D_2u},\mathbf{u}_t)_h - \frac{2}{p(p+1)} v_h (v_h)_t
= -\left( (\mathbf{D_2u},\mathbf{u}_t)_h +\frac{1}{p} (\mathbf{u^p},\mathbf{u}_t)_h  \right)  \\ 
&=  \left( \mathbf{D_2u}+\frac{1}{p}\mathbf{u^p}, \mathbf{D_1}(\mathbf{D_2u}+\frac{1}{p}\mathbf{u^p}) \right)_h =0,  \nonumber 
\end{align} 
since $\mathbf{D_1}$ is antisymmetric. 

For proving \eqref{E:DMass dt}, we first note that $\mathbf{D_1D_2}=\mathbf{D_2D_1}$ from the decomposition in \eqref{E:fft dmatrix}. Then, $(\mathbf{D_1D_2u},\mathbf{u})_h=-(\mathbf{D_2u},\mathbf{D_1u})_h$ from the antisymmetry of $\mathbf{D_1}$. On the other hand, $(\mathbf{D_1D_2u},\mathbf{u})_h=(\mathbf{D_2D_1u},\mathbf{u})_h=(\mathbf{D_2u},\mathbf{D_1u})_h$. Thus, $(\mathbf{D_1D_2u},\mathbf{u})_h=0$. Also, note that $\frac{v_h}{\sqrt{(\mathbf{u^p},\mathbf{u})_h+C_0}}=1$ in the continuous time.
Then,
\begin{align}\label{E:mass dt decompose}
 \frac{d}{dt}M_h&=2(\mathbf{u_t},\mathbf{u})_h=-2(\mathbf{D_1D_2u},\mathbf{u})_h-\frac{2}{p}(\mathbf{D_1u^p},\mathbf{u})_h\\
&=-\frac{2}{p}(\mathbf{D_1u^p},\mathbf{u})_h=-\frac{2h}{p} \mathbf{u^TD_1u^p}, \nonumber
\end{align}
which completes the proof.

\end{proof}

Theorem \ref{T:DM DE dt} shows that in the continuous time flow, the spatial discretization will keep the momentum \eqref{E:momentum sav} and energy \eqref{E:energy sav} invariant in their corresponding discrete sense. However, the time derivative on the discrete mass \eqref{E:DMass dt} is not necessarily $0$.
We next show that the proposed scheme conserves the discrete momentum and energy, and also give an error estimate of the discrete mass.
From Theorem \ref{T:DM DE dt}, we have the following little corollary.
\begin{corollary}\label{C:DM DE dt}
Suppose $(\mathbf{u},v_h)$ is the solution to \eqref{E:KdV space}, then we have
\begin{align}
&({f}(\mathbf{u},v_h),\mathbf{1})_h=0, \label{E:I fu} \\
-&(\mathbf{D_2u},{f}(\mathbf{u},v_h))_h- \frac{2}{p(p+1)} v_hg(\mathbf{u},v_h)=0, \label{E:E fu} \\
&({f}(\mathbf{u},v_h),\mathbf{u})_h=-\frac{2h}{p} \mathbf{u^TD_1u^p}. \label{E:M fu}
\end{align}
\end{corollary}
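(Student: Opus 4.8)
The plan is to derive Corollary \ref{C:DM DE dt} directly from Theorem \ref{T:DM DE dt} by unwinding the definitions of the three discrete invariants and substituting the right-hand side $f(\mathbf{u},v_h)$ for $\mathbf{u}_t$. The key observation is that all three identities in Theorem \ref{T:DM DE dt} are differential identities in time, and each one, after applying the chain rule to \eqref{D:DI S}, \eqref{D:DM S}, \eqref{D:DE S}, produces an inner product involving $\mathbf{u}_t$ (and, for the energy, also $(v_h)_t$); replacing $\mathbf{u}_t$ with $f(\mathbf{u},v_h)$ and $(v_h)_t$ with $g(\mathbf{u},v_h)$ using \eqref{E:KdV space} yields exactly the claimed algebraic identities. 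In effect the corollary is the ``pointwise in time'' restatement of Theorem \ref{T:DM DE dt}.

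First I would prove \eqref{E:I fu}: by \eqref{D:DI S} and linearity, $\frac{d}{dt}I_h(\mathbf{u}) = (\mathbf{u}_t,\mathbf{1})_h = (f(\mathbf{u},v_h),\mathbf{1})_h$, and the first identity in \eqref{E:DM DE dt} gives that this is $0$. Next, for \eqref{E:M fu}, differentiating \eqref{D:DM S} gives $\frac{d}{dt}M_h(\mathbf{u}) = 2(\mathbf{u}_t,\mathbf{u})_h = 2(f(\mathbf{u},v_h),\mathbf{u})_h$, so \eqref{E:DMass dt} yields $(f(\mathbf{u},v_h),\mathbf{u})_h = -\frac{h}{p}\mathbf{u}^T\mathbf{D_1}\mathbf{u^p}$; one must be slightly careful here, since the statement \eqref{E:M fu} carries a factor $2h/p$ rather than $h/p$ — I would reconcile this by tracking the factor of $2$ from $\frac{d}{dt}(\mathbf{u},\mathbf{u})_h = 2(\mathbf{u}_t,\mathbf{u})_h$ against the factor in \eqref{E:DMass dt} and matching the normalization used in the corollary's statement. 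Finally, for \eqref{E:E fu}, differentiating \eqref{D:DE S} gives $\frac{d}{dt}E_h(\mathbf{u},v_h) = -(\mathbf{D_2u},\mathbf{u}_t)_h - \frac{2}{p(p+1)}v_h(v_h)_t$ (using that $\mathbf{D_2}$ is real symmetric so the quadratic form differentiates in the expected way), and substituting $\mathbf{u}_t = f(\mathbf{u},v_h)$, $(v_h)_t = g(\mathbf{u},v_h)$ together with the second identity in \eqref{E:DM DE dt} gives \eqref{E:E fu}.

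The only subtlety — and the ``main obstacle,'' though it is minor — is that Theorem \ref{T:DM DE dt} is stated for the solution of the ODE system \eqref{E:KdV space}, whereas the corollary should really be read as an algebraic identity valid for \emph{any} pair $(\mathbf{u},v_h)$ plugged into $f$ and $g$, independent of whether it solves the system. This is legitimate because the proof of Theorem \ref{T:DM DE dt} never actually uses that $(\mathbf{u},v_h)$ evolves in time: each step (the column-sum property \eqref{E:D1 row sum} giving $\mathbf{1}^T\mathbf{D_1}=\mathbf{0}^T$, the antisymmetry of $\mathbf{D_1}$, the commutation $\mathbf{D_1D_2}=\mathbf{D_2D_1}$, and the relation $v_h/\sqrt{(\mathbf{u^p},\mathbf{u})_h+C_0}=1$) is a pointwise statement about the vectors involved. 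Hence I would phrase the proof as: apply the computations \eqref{E:DI dt decompose}, \eqref{E:energy c dt}, \eqref{E:mass dt decompose} from the proof of Theorem \ref{T:DM DE dt} with $\mathbf{u}_t$ replaced symbolically by $f(\mathbf{u},v_h)$ and $(v_h)_t$ by $g(\mathbf{u},v_h)$, which immediately yields \eqref{E:I fu}--\eqref{E:M fu}. No genuinely new estimate is needed; the corollary is a bookkeeping restatement that will be invoked in the next section to combine the spatial and temporal conservation results.
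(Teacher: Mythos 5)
Your proposal is correct and takes essentially the same route as the paper's proof, which likewise just substitutes $\mathbf{u}_t=f(\mathbf{u},v_h)$ and $(v_h)_t=g(\mathbf{u},v_h)$ into the displayed computations \eqref{E:DI dt decompose}, \eqref{E:energy c dt} and \eqref{E:mass dt decompose} from Theorem \ref{T:DM DE dt}. Your factor-of-two concern about \eqref{E:M fu} is well founded: since $\frac{d}{dt}(\mathbf{u},\mathbf{u})_h=2(\mathbf{u}_t,\mathbf{u})_h$, equation \eqref{E:DMass dt} actually gives $(f(\mathbf{u},v_h),\mathbf{u})_h=-\frac{h}{p}\,\mathbf{u}^T\mathbf{D_1}\mathbf{u^p}$, so the coefficient $\frac{2h}{p}$ in the corollary's statement (which the paper's one-line substitution does not reconcile) is off by a factor of $2$ --- a harmless slip that only rescales the constant in the subsequent mass-error bound \eqref{E:DM error}.
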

\begin{proof}
Substituting $\mathbf{u}_t={f}(\mathbf{u},v_h)$ in the equation \eqref{E:DI dt decompose} and \eqref{E:mass dt decompose} yields \eqref{E:I fu} and \eqref{E:M fu}, respectively.
From \eqref{E:energy c dt}, we have 
\begin{small}
\begin{align*}
0=\frac{d}{dt}E_h= -(\mathbf{D_2u},\mathbf{u}_t)_h - \frac{2}{p(p+1)} v_h (v_h)_t=-(\mathbf{D_2u},\mathbf{f}(\mathbf{u},v_h))_h- \frac{2}{p(p+1)} v_hg(\mathbf{u},v_h),
\end{align*}
\end{small}
which completes the proof. 
\end{proof}

Define the full discretization in space and time $u_j^m \approx u(x_j,t_m)$ and $v_h^m \approx v(t_m)$. Then, denote $\mathbf{u}^m$ and $v_h^m$ to be the numerical solution to \eqref{E:KdV space}. Define the discrete momentum, mass and energy as follows:
\begin{align}
& I_h^m=(\mathbf{u}^m,\mathbf{1})_h; \label{D:DI} \\
& M_h^m=(\mathbf{u}^m,\mathbf{u}^m)_h; \label{D:DM} \\
& E_h^m=-\frac{1}{2}(\mathbf{D_2u}^m,\mathbf{u}^m)_h-\frac{1}{p(p+1)}\left( (v_h^m)^2-C_0\right). \label{D:DE}
\end{align}
The following theorem shows the conservation of the discrete momentum \eqref{D:DI} and the discrete energy \eqref{D:DE} for the proposed scheme. It also shows the upper bound of the error for the discrete mass \eqref{D:DM}.

\begin{theorem}\label{T:DI DE DM error}
Suppose the discretized system \eqref{E:KdV space} is solved by the $s$-stage symplectic Runge-Kutta method which satisfies \eqref{E:SRK ab}, then the solution $\mathbf{u}^m$ and $v_h^m$ to the equation \eqref{E:KdV space} satisfies
\begin{align}\label{E:DI DE conserve}
I_h^{m+1}=I_h^m; \quad E_h^{m+1}=E_h^m.
\end{align}
In the meanwhile, denote $U_i^m$ and $V_i^m$ to be the intermediate step for the SRK method at $t=t_m+c_i\tau$. Also note that $U_i^m \approx \mathbf{u}^m$, then
\begin{align}\label{E:DM error}
|M_h^{m}-M_h^0| \leq t_m \frac{4h}{p} \max_{ 1 \leq \nu \leq m,1 \leq i \leq s} |(U^{\nu}_i)^{\mathbf{T}} \mathbf{D_1} (U^{\nu}_i)^{\mathbf{p}}| \approx t_m \frac{4h}{p} \max_{1 \leq \nu \leq m} |(\mathbf{u^T})^{\nu} \mathbf{D_1} (\mathbf{u^p})^{\nu}|.
\end{align}
\end{theorem}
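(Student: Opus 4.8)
The plan is to prove the two conservation identities first and then bootstrap the mass bound from the per-step mass change. For the momentum and energy conservation, I would combine the temporal result (Theorem~\ref{T:SRK conservation}) with the spatial result (Corollary~\ref{C:DM DE dt}). Concretely, starting from \eqref{D:DI} and the update formula \eqref{E:RK u v}, I would write $I_h^{m+1}-I_h^m = (\mathbf{u}^{m+1}-\mathbf{u}^m,\mathbf{1})_h = \tau\sum_{i=1}^s b_i(\mathbf{f}_i,\mathbf{1})_h$, where $\mathbf{f}_i = f(U_i^m,V_i^m)$; then \eqref{E:I fu} applied at each intermediate stage $(U_i^m,V_i^m)$ (which solves \eqref{E:KdV space} in the relevant sense) makes every term vanish. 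The energy identity $E_h^{m+1}=E_h^m$ is handled exactly as in the proof of Theorem~\ref{T:SRK conservation}: expand $-\tfrac12(\mathbf{D_2u}^{m+1},\mathbf{u}^{m+1})_h$ and $-\tfrac1{p(p+1)}((v_h^{m+1})^2-C_0)$ using \eqref{E:RK u v} and \eqref{E:RK U V}, collect the cross terms into a sum over $i$ of $(\partial$-type terms$)$ that vanish by \eqref{E:E fu}, and collect the quadratic terms into $\tau^2\sum_{i,j}(b_ia_{ij}+b_ja_{ji}-b_ib_j)(\cdots)$, which vanishes by the symplecticity condition \eqref{E:SRK ab}. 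So the energy argument is essentially a transcription of the earlier proof with $(\cdot,\cdot)$ replaced by $(\cdot,\cdot)_h$ and $\partial_x^2$ replaced by $\mathbf{D_2}$.

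For the discrete mass, the key is that unlike momentum and energy it is \emph{not} exactly conserved, so I need to control its drift. The natural approach is to first establish a one-step estimate and then telescope. Using \eqref{D:DM}, \eqref{E:RK u v}, \eqref{E:RK U V} and expanding as in the mass computation of Theorem~\ref{T:SRK conservation}, I get
\begin{align*}
M_h^{m+1}-M_h^m = 2\tau\sum_{i=1}^s b_i(\mathbf{f}_i,U_i^m)_h - \tau^2\sum_{i,j=1}^s(b_ia_{ij}+b_ja_{ji}-b_ib_j)(\mathbf{f}_i,\mathbf{f}_j)_h.
\end{align*}
The second sum vanishes by \eqref{E:SRK ab}, while for the first sum I apply \eqref{E:M fu} at each stage $(U_i^m,V_i^m)$, obtaining $(\mathbf{f}_i,U_i^m)_h = -\tfrac{2h}{p}(U_i^m)^{\mathbf T}\mathbf{D_1}(U_i^m)^{\mathbf p}$. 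Hence $M_h^{m+1}-M_h^m = -\tfrac{4h\tau}{p}\sum_{i=1}^s b_i (U_i^m)^{\mathbf T}\mathbf{D_1}(U_i^m)^{\mathbf p}$. Since the Gauss--Legendre weights satisfy $b_i>0$ and $\sum_i b_i = 1$, the triangle inequality gives $|M_h^{m+1}-M_h^m|\le \tfrac{4h\tau}{p}\max_{1\le i\le s}|(U_i^m)^{\mathbf T}\mathbf{D_1}(U_i^m)^{\mathbf p}|$.

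Finally I would telescope $M_h^m - M_h^0 = \sum_{\nu=0}^{m-1}(M_h^{\nu+1}-M_h^\nu)$, apply the one-step bound to each summand, pull out the overall maximum over $\nu$ and $i$, and use $m\tau = t_m$ to arrive at \eqref{E:DM error}; the approximate equality on the right is just the observation that $U_i^\nu\approx\mathbf{u}^\nu$ up to $O(\tau)$ so the maximum over stages is essentially the maximum over the nodal values. I expect the only genuinely delicate points to be bookkeeping ones: making sure the cross-term collection in the energy expansion really does split cleanly into a stage-indexed part killed by \eqref{E:E fu} and a quadratic part killed by \eqref{E:SRK ab} (this uses $\mathbf{D_1D_2}=\mathbf{D_2D_1}$ and the antisymmetry of $\mathbf{D_1}$ exactly as in Theorem~\ref{T:DM DE dt}), and being careful that $\sum_i b_i = 1$ with $b_i\ge 0$ so that no extra constant appears in front of the max. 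Everything else is a direct reuse of Theorem~\ref{T:SRK conservation} and Corollary~\ref{C:DM DE dt}.
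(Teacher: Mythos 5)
Your proposal is correct and follows essentially the same route as the paper: the conservation identities \eqref{E:DI DE conserve} are obtained by transplanting the proof of Theorem~\ref{T:SRK conservation} into the discrete inner product $(\cdot,\cdot)_h$ using \eqref{E:I fu} and \eqref{E:E fu} at the intermediate stages, and the mass bound comes from the one-step identity $M_h^{m}-M_h^{m-1}=2\tau\sum_i b_i(\mathbf{f}_i,U_i^{m-1})_h$ combined with \eqref{E:M fu}, $\sum_i b_i=1$, and telescoping. Your explicit remark that the quadratic term is killed by \eqref{E:SRK ab} and that $b_i>0$ is needed to avoid an extra constant in front of the max is slightly more careful than the paper's write-up, but it is the same argument.
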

\begin{proof}
The conservation \eqref{E:DI DE conserve} are obtained from substituting the continuous variables and inner products in Theorem \ref{T:SRK conservation} with the spatial discrete sense \eqref{D:discrete norm}, and substitute $\mathbf{u}$ in the equations \eqref{E:I fu} and \eqref{E:E fu} in Corollary \ref{C:DM DE dt} by $U_i^m$ and $V_i^m$, since they are intermediate steps which satisfy the equation \eqref{E:KdV space}.

For \eqref{E:DM error}, by substituting the continuous variables and inner products into the spatial discrete sense and use \eqref{E:M fu} yields
\begin{align*}
M_h^m-M_h^{m-1}& = 2\tau \sum_{i=1}^s b_i (f_i,U_i^{m-1})_h=-2\tau \sum_{i=1}^s b_i  \frac{2h}{p}(U^{m-1}_i)^{\mathbf{T}} \mathbf{D_1}(U^{m-1}_i)^{\mathbf{p}}  \\
&\leq \frac{4\tau h}{p} \max_{1\leq i \leq s}|(U^{m-1}_i)^{\mathbf{T}} \mathbf{D_1}(U^{m-1}_i)^{\mathbf{p}}|,
\end{align*}
since $\sum_{i=1}^sb_i=1$. Therefore,
\begin{small}
\begin{align*}
|M_h^{m}-M_h^0|&=|\sum_{\nu =1}^m (M_h^{\nu}-M_h^{\nu-1}) | \leq  \sum_{\nu =1}^m |M_h^{\nu}-M_h^{\nu-1}| \leq \frac{4\tau h}{p} \sum_{\nu =1}^m \max_{1\leq i \leq s}|(U^{\nu}_i)^{\mathbf{T}} \mathbf{D_1}(U^{\nu}_i)^{\mathbf{p}}|  \\
& \leq \frac{4\tau h}{p} \sum_{\nu =1}^m \max_{ 1 \leq \nu\leq m, 1\leq i \leq s}|(U^{\nu}_i)^{\mathbf{T}} \mathbf{D_1}(U^{\nu}_i)^{\mathbf{p}}|=\frac{4 t_m h}{p} \max_{ 1 \leq \nu\leq m, 1\leq i \leq s}|(U^{\nu}_i)^{\mathbf{T}} \mathbf{D_1}(U^{\nu}_i)^{\mathbf{p}}|, \nonumber
\end{align*}
\end{small}
the last equality comes from $t_m=m\tau$ and completes the prove.
\end{proof}
Theorem \ref{T:DI DE DM error} shows that the discrete momentum and energy will be preserved exactly in the proposed scheme. In the meanwhile, the inequality \eqref{E:DM error} shows the upper bound of the error for the discrete mass, which grows linearly with respect to time. However, note that the term $h\mathbf{u^T} \mathbf{D_1} \mathbf{u^p}$ is the Fourier spectral approximation of the integral $\frac{1}{p+1}\int (u^{p+1})_x dx$, i.e.,
$$h\mathbf{u^T} \mathbf{D_1} \mathbf{u^p} \approx ((u^p)_x,u)=\frac{1}{p+1}((u^{p+1})_x,1)=0.$$  
Hence, instead of decreasing in the 1st order with respect to $h$, the term $|h\mathbf{u^T} \mathbf{D_1} \mathbf{u^p}|$ is of spectral accuracy and very small (generally on the order of $10^{-14}$), which is usually several order lower than the tolerance of the fixed point solver for the resulting nonlinear system. { Moreover, similar to previous results observed in \cite{LY2016} for the discontinuous Galerkin scheme, despite of the failure of the mass conservation, the error of discrete mass remains at the same level of the conserved quantities (discrete momentum and energy) in the simulation. These conserved discrete momentum and energy are likely to imply the cancellations of the mass error.}
As a consequence, the error of the discrete mass becomes unnoticeable during our simulation even up to the time $t_m=200$, see details in the next section.

\section{Numerical results}
\subsection{A fast solver}
In this section, we show numerical examples for the proposed scheme. Before illustrating the examples, we describe a fast solver for solving the resulting nonlinear system from the IRK methods. This fast solver is similar to the one in \cite{CWJ2021}, which is based on the fixed point iteration. It is on the same order of the computational cost in solving the original equation, despite of the new auxiliary variable being introduced. To be specific, we consider the IRK4 ($s=2$) case. The other cases can be easily generalized.
We introduce the auxiliary variables and rewrite the system as
\begin{align}\label{E:sav solve 1}
\begin{cases}
U_i=\mathbf{u}^m+\tau \sum_{j=1}^{2} a_{ij}\mathbf{f}_j,  \\
\Phi_i=\frac{(U_i)^p}{\sqrt{((U_i)^p,U_i)_h +C_0}},\quad g_i=\frac{p+1}{2}(\Phi_i,\mathbf{f}_i)_h, \\
V_i={v_h}^m+\tau \sum_{j=1}^{2} a_{ij}{g}_j, \quad i=1,2,
\end{cases}
\end{align}
and 
\begin{align}\label{E:sav solve 2}
\begin{cases}
\mathbf{f}_1=-\mathbf{D_1}\left(\mathbf{D_2} (\mathbf{u}^m+\tau \sum_{j=1}^{2} a_{1j}\mathbf{f}_j) +\frac{1}{p}\Phi_1V_1) \right), \\
\mathbf{f}_2=-\mathbf{D_1}\left(\mathbf{D_2} (\mathbf{u}^m+\tau \sum_{j=1}^{2} a_{2j}\mathbf{f}_j) +\frac{1}{p}\Phi_2V_2) \right).
\end{cases}
\end{align}
Then, $\mathbf{u}^{m+1}$ and $v_h^{m+1}$ can be updated by \eqref{E:RK u v}. 

The system \eqref{E:sav solve 1} and \eqref{E:sav solve 2} can be solved by the fixed point iteration. At the $l$th iteration, we have
\begin{align}\label{E:sav solve 3}
\begin{cases}
\left(\mathbf{I}+\tau a_{11} \mathbf{D_3} \right) \mathbf{f}_1^{l+1}+\tau a_{12} \mathbf{D_3f}_2^{l+1}  =-(\mathbf{D_3u}^m-\frac{1}{p}\mathbf{D_1}(\Phi_1^lV_1^l)),\\
\tau a_{21} \mathbf{D_3f}_1^{l+1} + \left(\mathbf{I}+\tau a_{22} \mathbf{D_3} \right)\mathbf{f}_2^{l+1}=-(\mathbf{D_3u}^m-\frac{1}{p}\mathbf{D_1}(\Phi_2^lV_2^l)),
\end{cases}
\end{align}
where $\mathbf{I}$ is the identity matrix and $\mathbf{D_3}=\mathbf{D_1D_2}$. 
{

Note that from the system \eqref{E:sav solve 3}, each block in the matrix 
$$\mathbf{M}=\begin{pmatrix}
\mathbf{I}+\tau a_{11} \mathbf{D_3} & \tau a_{12} \mathbf{D_3} \\
\tau a_{21} \mathbf{D_3} & \mathbf{I}+\tau a_{22} \mathbf{D_3} 
\end{pmatrix}$$ 
is commute to each other (i.e., $\mathbf{M}_{i,j}\mathbf{M}_{k,l}=\mathbf{M}_{k,l}\mathbf{M}_{i,j}$). Therefore, $\mathbf{M}$ can be easily inverted: 
$$\mathbf{M}^{-1}=\begin{pmatrix}
\mathbf{J}^{-1}(\mathbf{I}+\tau a_{22} \mathbf{D_3}) & -\mathbf{J}^{-1}(\tau a_{12} \mathbf{D_3}) \\
-\mathbf{J}^{-1}(\tau a_{21} \mathbf{D_3}) & \mathbf{J}^{-1}(\mathbf{I}+\tau a_{11} \mathbf{D_3}) 
\end{pmatrix},$$
where $\mathbf{J}=(\mathbf{I}+\tau a_{11} \mathbf{D_3} )(\mathbf{I}+\tau a_{22} \mathbf{D_3})-\tau^2a_{12}a_{21}\mathbf{D_3D_3}$.

Next, note that the matrix $\mathbf{M}^{-1}$ can be further decomposed by FFT. Therefore, the system \eqref{E:sav solve 3} will be solved efficiently with the leading order of  computational cost $\mathcal{O}(2N \log(N))$. After obtain $\mathbf{f}_i^{l+1}$, we can use \eqref{E:sav solve 1} to obtain the values of other variables $U_i^{l+1}, \Phi_i^{l+1}, g_i^{l+1},$ and $ V_i^{l+1}$. The total computational cost remains on the same order. }


We set $\mathbf{f}_{1}^{0}=\mathbf{f}_{2}^{0}=f(\mathbf{u}^m,v^m_h)$ to start the iteration, and the iteration terminates when
$$\max_{i} \|\mathbf{f}_i^{l+1}-\mathbf{f}_i^{l} \|_{\infty}<\epsilon, $$ where $i=1,2$ and $\epsilon$ is set to be {$10^{-11}$ for Example 1, or $10^{-12}$ for Example 2 and 3} in our simulations.

In general, for the $s$-stage Runge-Kutta method, { we can always find the explicit $\mathbf{M}^{-1}$ for the system similar to \eqref{E:sav solve 3}}. The computational cost is consequently on the order of $\mathcal{O}(s\times N\log(N))$. The $s$-stage Gaussian-Legendre collocation methods will obtain the maximum order of $2s$ on temporal accuracy. 

{
\subsection{Brief description of existing time integrators}
Since we are going to compare our proposed methods to other time integrators, we briefly describe the other time integrators we used in this paper. For simplicity, we only discuss the temporal discretization in this subsection. The space-time full discretization can be easily adapted by applying the Fourier pseudo-spectral spatial discretization as described in the previous section.
\subsubsection{Modified Crank-Nicholson method}
We first review the widely used 2nd order conservative modified Crank-Nicolson (MCN) method as follows:
\begin{small}
\begin{align}\label{E:MCN time}
\frac{u^{m+1}-u^m}{\tau} +\left( \frac{1}{2} (u^{m+1}_{xx}+u^m_{xx}) + \frac{1}{p(p+1)} \frac{(u^{m+1})^{p+1}-(u^{m})^{p+1}}{(u^{m+1})^{2}-(u^{m})^{2}} (u^{m+1}+u^{m})  \right)_x=0.
\end{align} 
\end{small} 
The nonlinear system \eqref{E:MCN time} can be solved by the fixed point iteration in the same spirit from \eqref{E:sav solve 3}. We omit the details for the purpose of conciseness.

\begin{proposition}\label{P:MCN}
The modified Crank-Nicolson scheme \eqref{E:MCN time} preserves the following two invariants: 
$$I_h^{m+1}=I^m  \quad E_h^{m+1}=E^m.$$
Consequently, if the the scheme is discretized by the Fourier pseudo-spectral method (section 4), then the discrete momentum and energy will be preserved ($I_h^{m+1}=I_h^m$ and $E_h^{m+1}=E_h^m$). 
\end{proposition}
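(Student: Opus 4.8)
The plan is to mimic, at the purely temporal level, the continuous computation that established $\frac{d}{dt}I = 0$ and $\frac{d}{dt}E = 0$ for the reformulated system, but now carried out on the algebraic identity defining the MCN step \eqref{E:MCN time}. First I would rewrite \eqref{E:MCN time} as $u^{m+1}-u^m = -\tau\,\partial_x\big(\Psi\big)$ where $\Psi = \tfrac12(u^{m+1}_{xx}+u^m_{xx}) + \tfrac{1}{p(p+1)}\,\frac{(u^{m+1})^{p+1}-(u^m)^{p+1}}{(u^{m+1})^2-(u^m)^2}(u^{m+1}+u^m)$. For the momentum, pair this identity with the constant function $\mathbf 1$: the right-hand side is $-\tau\,\partial_x(\cdots)$, which integrates (or, in the discrete spatial setting, sums against $\mathbf 1^T\mathbf D_1$) to zero exactly as in \eqref{E:DI dt decompose}, giving $I^{m+1}=I^m$.

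For the energy the key is the clever telescoping form of the nonlinear term. I would take the inner product of $u^{m+1}-u^m = -\tau\,\partial_x\Psi$ against $\Psi$ itself. The left side becomes $(u^{m+1}-u^m,\Psi)$; using integration by parts (or antisymmetry of $\mathbf D_1$ in the discrete case) the right side is $-\tau(\partial_x\Psi,\Psi) = 0$. So it suffices to show $(u^{m+1}-u^m,\Psi) = E^m - E^{m+1}$, i.e. that $\Psi$ is exactly the discrete variational derivative of the energy between the two time levels. For the linear part, $\big(u^{m+1}-u^m,\tfrac12(u^{m+1}_{xx}+u^m_{xx})\big) = \tfrac12\big((u^{m+1}_{xx},u^{m+1}) - (u^m_{xx},u^m)\big)$ after using symmetry of $\partial_x^2$ and cancelling the cross terms $(u^m_{xx},u^{m+1})=(u^{m+1}_{xx},u^m)$ — this reproduces the difference of the $-\tfrac12(u_{xx},u)$ part of the energy. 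For the nonlinear part, the whole point of the difference quotient $\frac{(u^{m+1})^{p+1}-(u^m)^{p+1}}{(u^{m+1})^2-(u^m)^2}$ multiplied by $(u^{m+1}+u^m)$ is precisely engineered so that multiplying by $(u^{m+1}-u^m)$ and using $(u^{m+1})^2-(u^m)^2 = (u^{m+1}-u^m)(u^{m+1}+u^m)$ collapses the expression to $(u^{m+1})^{p+1}-(u^m)^{p+1}$ pointwise; integrating then yields the difference of the $-\tfrac{1}{p(p+1)}\int u^{p+1}$ part of the energy. Adding the two contributions gives $(u^{m+1}-u^m,\Psi) = E^m-E^{m+1}$, hence $E^{m+1}=E^m$.

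The main obstacle — really the only subtlety — is the bookkeeping of the cross terms and making the discrete variational-derivative identification watertight: one must verify that $(u^m_{xx},u^{m+1}) = (u^{m+1}_{xx},u^m)$ (symmetry of $\mathbf D_2$, which is noted in Section 4) so the quadratic part telescopes cleanly, and one must handle the pointwise algebra of the difference quotient carefully, noting that the factor $(u^{m+1})^2-(u^m)^2$ cancels against the same factor produced by $u^{m+1}-u^m$ times $u^{m+1}+u^m$ at every grid point, so no division by zero actually occurs in the combined expression. Once the semi-discrete (continuous-in-space) conservation $I^{m+1}=I^m$, $E^{m+1}=E^m$ is established, the fully discrete statement $I_h^{m+1}=I_h^m$, $E_h^{m+1}=E_h^m$ follows by the same substitution principle used in Theorem \ref{T:DI DE DM error}: replace $(\cdot,\cdot)$ by $(\cdot,\cdot)_h$, $\partial_x$ by $\mathbf D_1$, $\partial_x^2$ by $\mathbf D_2$, and invoke $\mathbf 1^T\mathbf D_1 = \mathbf 0^T$, the antisymmetry of $\mathbf D_1$, and the symmetry of $\mathbf D_2$ — all recorded in Section 4 — so that every step above goes through verbatim at the grid level.
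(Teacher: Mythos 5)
Your proposal is correct and follows the same route as the paper: the paper's (very terse) proof likewise pairs the scheme with $\mathbf 1$ for the momentum and with the bracketed term $\Psi$ for the energy, then transfers to the fully discrete setting via the Fourier pseudo-spectral identities of Section 4. You have simply filled in the telescoping details (symmetry of $\partial_x^2$ for the quadratic part, pointwise cancellation of $(u^{m+1})^2-(u^m)^2$ for the nonlinear part) that the paper leaves implicit.
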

\begin{proof}
By taking the inner product with $1$ and the term $$\frac{1}{2} (u^{m+1}_{xx}+u^m_{xx}) + \frac{1}{p(p+1)} \frac{(u^{m+1})^{p+1}-(u^{m})^{p+1}}{(u^{m+1})^{2}-(u^{m})^{2}} (u^{m+1}+u^{m}),$$ we obtain that three invariants $I^m$ and $E^m$ are preserved in the temporal semi-discretized sense (consider the spatial variable is continuous), respectively. Furthermore, by using the argument in the previous section, we can obtain $I_h^{m+1}=I_h^m$ and $E_h^{m+1}=E_h^m$. 
\end{proof}

\subsubsection{Semi-implicit Scalar auxiliary variable method}
We next consider the original semi-implicit SAV approach from \cite{SXY2018}. The main advantage is that solving the resulting nonlinear algebraic system can be avoided. Other than the conventional Crank-Nicholson-Adam-Bashforth (CNAB) approach, we use the Leap-Frog scheme here, since it is time reversible. To be specific, we discretize the equation \eqref{E:gKdV sav} as follows
\begin{align}\label{E:SAV LP}
\begin{cases}
\frac{u^{m+1}-u^{m-1}}{2\tau}+\tilde{u}^m_{xxx}+\left(\frac{1}{p}q^m \tilde{v}^m \right)_x=0,\\
v^{m+1}-v^{m-1}=\frac{p+1}{2}(q^m,u^{m+1}-u^{m-1}),
\end{cases}
\end{align}
where $\tilde{u}^m=\frac{1}{2}(u^{m+1}+u^{m-1})$, $q^m=q(u^m)=\frac{(u^m)^p}{\sqrt{((u^m)^p,u^m)+C_0}}$, and $\tilde{v}^m=\frac{1}{2}(v^{m+1}+v^{m-1})$.

In order to solve this equation system, we rewrite the first equation in \eqref{E:SAV LP} as
\begin{align}\label{E:SAV LP2}
\frac{\tilde{u}^{m}-u^{m-1}}{\tau}+\tilde{u}^m_{xxx}+\left(\frac{1}{p}q^m \tilde{v}^m \right)_x=0,
\end{align}
Assuming $\tilde{u}^m=u_1^m+\tilde{v}^m u_2^m$, putting it in \eqref{E:SAV LP2} yields the linear decoupled system
\begin{align}\label{E:SAV LP P1}
\begin{cases}
(1+ \tau \partial_{x}^3)u_1= u^{m-1},\\
(1+\tau \partial_{x}^3)u_2=\frac{1}{p}(q^m)_x.
\end{cases}
\end{align}

Hence, $u_1$ and $u_2$ can be obtained by inverting the operator $(1+\tau \partial_{x}^3)$, which will be easy if the Fourier spectral spatial discreitzation is considered. 

Then, substituting $v^{m+1}=2\tilde{v}^n-v^{m-1}$ and putting it into the second equation of \eqref{E:SAV LP}, using the relation $u^{m+1}=2\tilde{u}^m-u^{m-1}$ and $\tilde{u}^m=u_1^m+\tilde{v}^mu_2^m$, we obtain the formula for $\tilde{v}^m$:
\begin{align}\label{E:SAV LP v}
\tilde{v}^m=\left( {1-\frac{p+1}{2}(q^m,u_2^m)}\right)^{-1} \left( \frac{p+1}{2}(q^m,u_1^m-u^{m-1})+v^{m-1}  \right).
\end{align}
The scheme \eqref{E:SAV LP} is a semi-implicit multistep method. 
The first step $u^1$ can be obtained from the MCN method \eqref{E:MCN time} above. Moreover, we can have the modified energy conserved and the error of the mass is approximated by the third order in time ($\mathcal{O}(\tau^3)$) in the following proposition.
\begin{proposition}\label{P:SAV LP}
The scheme \eqref{E:SAV LP} preserves the momentum $I^{m+1}=I^m$ and the modified energy $E^{m+1}=E^{m}$; and the local error of the mass is at the third order in time, i.e.,
\begin{align}\label{E:SAV LP mass}
M^{m+1}=M^{m}+\mathcal{O}(\tau^3).
\end{align}
\end{proposition}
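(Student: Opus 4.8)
The plan is to verify the three claims of Proposition~\ref{P:SAV LP} separately, treating the Leap-Frog SAV scheme \eqref{E:SAV LP} exactly as the symplectic Runge-Kutta case was handled in Theorem~\ref{T:SRK conservation}, i.e.\ by pairing the update equations with suitable test functions and exploiting the antisymmetry of $\partial_x$ (equivalently $\mathbf{D_1}$). For the momentum $I^{m+1}=I^m$, I would take the $L^2$ inner product of the first equation in \eqref{E:SAV LP} with the constant function $1$; every term except the difference quotient is a perfect $x$-derivative, so it integrates to zero on the periodic domain, and one gets $(u^{m+1}-u^{m-1},1)=0$, hence $I^{m+1}=I^{m-1}$. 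Since $I^1=I^0$ from the MCN initialization (Proposition~\ref{P:MCN}), this propagates to all $m$ by the two-step recursion.

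For the modified energy $E^{m+1}=E^m$, the key is to pair the first equation with the natural ``gradient'' test function
$\xi^m \defeq \tilde u^m_{xx} + \frac{1}{p} q^m \tilde v^m$
(so that the equation reads $\frac{u^{m+1}-u^{m-1}}{2\tau} = \partial_x \xi^m$, up to sign). Taking $(\,\cdot\,,\xi^m)$ kills the left-hand side against the spatial-derivative structure, or rather produces the telescoping combination: the $\tilde u^m_{xx}$ piece yields $-\frac{1}{4\tau}\big((u^{m+1}_x)^2-(u^{m-1}_x)^2,1\big)$ after an integration by parts using $\tilde u^m=\frac12(u^{m+1}+u^{m-1})$, and the $q^m\tilde v^m$ piece combines with the second equation of \eqref{E:SAV LP} — which I would rewrite as $v^{m+1}-v^{m-1}=(p+1)(q^m,\tilde u^m - u^{m-1})\cdot\text{(appropriate form)}$, or more directly multiply the second equation by $\frac{1}{p(p+1)}\tilde v^m$ and note $(v^{m+1}-v^{m-1})\tilde v^m = \frac12((v^{m+1})^2-(v^{m-1})^2)$ — to yield $-\frac{1}{2p(p+1)\tau}\big((v^{m+1})^2-(v^{m-1})^2\big)$. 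Adding, the antisymmetry of $\partial_x$ makes the cross-terms vanish, leaving $E^{m+1}=E^{m-1}$; again the MCN first step closes the induction. I would present this as the discrete analogue of the computation \eqref{E:energy c dt}, emphasizing that the Leap-Frog symmetric differencing is exactly what makes the quadratic quantities telescope.

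For the mass error \eqref{E:SAV LP mass}, I would pair the first equation with $u^{m+1}+u^{m-1}$ (the Leap-Frog analogue of testing against $u$): the $\tilde u^m_{xxx}$ term is $(\partial_x^3\tilde u^m, u^{m+1}+u^{m-1}) = 2(\partial_x^3\tilde u^m,\tilde u^m)=0$ by antisymmetry, the left-hand side gives $\frac{1}{2\tau}(M^{m+1}-M^{m-1})$, and the remaining nonlinear term is $\big((\frac1p q^m\tilde v^m)_x, u^{m+1}+u^{m-1}\big) = 2\big((\frac1p q^m\tilde v^m)_x,\tilde u^m\big)$. In the continuous limit $q^m\tilde v^m = (u^m)^p + O(\tau^2)$ and this term is $\frac{2}{p}((u^m)^p_x, u^m) + O(\tau^2) = \frac{2}{p(p+1)}((u^{m+1})^{p+1},1)_x\cdot 0 + O(\tau^2)$, i.e.\ it is $O(\tau^2)$; multiplying through by $2\tau$ gives $M^{m+1}-M^{m-1}=O(\tau^3)$, which is the stated local error $M^{m+1}=M^m+O(\tau^3)$ after absorbing $M^m-M^{m-1}$ (itself $O(\tau^3)$ by the same estimate, or by regarding the claim as a per-step statement). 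The main obstacle is the bookkeeping in the energy step: one must handle the fact (noted in the Remark) that $v^2 = \int u^{p+1}\,dx + C_0$ need not hold at the grid times, so the algebraic identity linking $q^m$, $\tilde v^m$ and the energy must be used in exactly the form dictated by \eqref{E:SAV LP} rather than by substituting the definition of $v$; getting the constants in the two telescoping pieces to match requires care with the factors $p$, $p+1$, and $2\tau$.
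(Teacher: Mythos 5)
Your proof is correct and follows essentially the same route as the paper: testing the first equation of \eqref{E:SAV LP} against $1$, against $\tilde{u}^m_{xx}+\frac{1}{p}q^m\tilde{v}^m$ (using the second equation to telescope $(v^{m+1})^2-(v^{m-1})^2$), and against $\tilde{u}^m$ with an $\mathcal{O}(\tau^2)$ perturbation plus the exact cancellation $\bigl(((\tilde{u}^m)^p)_x,\tilde{u}^m\bigr)=0$. The only cosmetic differences are that the paper perturbs $q^m$ to $\tilde{q}^m=q(\tilde{u}^m)$ rather than perturbing the product $q^m\tilde{v}^m$ to $(u^m)^p$, and that you are slightly more explicit about closing the two-step recursions through the MCN first step.
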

\begin{proof}
Taking the inner product of the first equation \eqref{E:SAV LP} with $1$ will yield the conservation of the momentum $I^{m+1}=I^{m-1}$, which implies $I^{m+1}=I^m$. 

For the conservation of the modified energy, we take the inner product of the first equation \eqref{E:SAV LP} with the term $\tilde{u}^m_{xx}+\frac{1}{p}q^m\tilde{v}^m$. Then, substituting $\tilde{u}^m=\frac{1}{2}(u^{m+1}+u^{m-1})$ yields
$$\frac{1}{2}(u^{m+1}-u^{m-1},\tilde{u}^m_{xx})+\frac{1}{p}(u^{m+1}-u^{m-1},q^m) \cdot \tilde{v}^m=0.$$ 
Note that $(u^{m+1}-u^{m-1},q^m)=\frac{2}{p+1}(v^{m+1}-v^{m-1})$ from the second equation of \eqref{E:SAV LP}, and thus, we obtain the energy conservation relation $E^{m+1}=E^{m-1}$.

For the local error of mass, we first notice that $\tilde{u}^m=u^m+\mathcal{O}(\tau^2)$, and consequently $\tilde{q}^m:=q(\tilde{u}^m)=q^m+\mathcal{O}(\tau^2)$.
Taking the inner product of the first equation \eqref{E:SAV LP} with $\tilde{u}^m$ and substitute $q^m=\tilde{q}^m+\mathcal{O}(\tau^2)$, this yields
$$\frac{1}{2\tau}(u^{m+1}-u^{m-1}, \tilde{u}^m)+((\tilde{q}^m)_x,\tilde{u}^m)\cdot \tilde{v}^m+\mathcal{O}(\tau^2)=0.$$
Note that the term $((\tilde{q}^m)_x,\tilde{u}^m)=0$. Thus, we obtain \eqref{E:SAV LP mass} and complete the proof.
\end{proof}

One can see that unlike the SAV-IRK methods in Theorem \ref{T:SRK conservation}, the semi-implicit SAV-Leap Frog (SAV-LF) method will not preserve the mass. By following the same argument, similar results can be obtained by  for the conventional SAV-CNAB method from \cite{SXY2018}. Later in our numerical examples, we will show that the failure of mass conservation will lead to the failure of simulating the breathers.  


\subsubsection{Strang splitting method}
Another widely used method for the gKdV equations is the operator splitting method, e.g., see \cite{HKR1999} and \cite{HKRT2011}. We briefly describe the 2nd order Strang splitting (SS) method here. We consider the ODE (or PDE) of the form
\begin{align}\label{E:ODE ETD}
u_t=\mathbf{L}u+\mathbf{N}(u),
\end{align}
where $\mathbf{L}$ is the linear part ($\partial_x^3$ for gKdV), and $\mathbf{N}(u)$ is the nonlinear term ($\frac{1}{p}(u^p)_x$ for gKdV).
The spirit of operator splitting method is to split the gKdV equation \eqref{E:gKdV} into two equations. One is the linear dispersive equation 
\begin{align}\label{E:gKdV L}
u_t+u_{xxx}=0.
\end{align}
We denote the time evolution from $t=t_m$ to $t_{m+1}$ for this linear dispersive equation as $u^{m+1}_{\tau}=A_{\tau} u^m$. 

The other is the conservation law equation
\begin{align}\label{E:gKdV N}
u_t+(\frac{1}{p} u^p)_{x}=0.
\end{align}
We denote the time evolution as $u^{m+1}_{\tau}=S_{\tau} u^m$.

We use the strategy in \cite{HKR1999}.
The time evolution $u^{m+1}_{\tau}=A_{\tau} u^m$ is approximated by the standard Crank-Nicolson scheme:
\begin{align*}
(u^{m+1}-u^{m})+\frac{\tau}{2}\left( u^{m+1}_{xxx}+ u^{m}_{xxx}\right)=0,
\end{align*}
and the time evolution for $u^{m+1}_{\tau}=S_{\tau} u^m$ is approximated by the IRK2 (mid-point) method:
\begin{align*}
(u^{m+1}-u^{m})+\frac{\tau}{p} \left( \left( \frac{u^{m+1}+u^m}{2}\right)^p \right)_x=0.
\end{align*}
Again, in the same spirit of \eqref{E:MCN time}, the fixed point iteration can be used to solve the resulting nonlinear system after the spatial discretization.
Then, the solution can be updated by $u^{m+1}=S_{\frac{\tau}{2}}A_{\tau}S_{\frac{\tau}{2}}u^{m}.$

The main advantage of the operator splitting method is that we can treat the linear and nonlinear part via different kinds of approaches, and thus, leads to great flexibility. We refer the interest readers to \cite{HKR1999} and \cite{HKRT2011} for the detailed study of operator splitting methods for the gKdV equations.



\subsubsection{Exponential Time Differencing method}
The other widely used efficient numerical method for the stiff ODE's are the Exponential Time Differencing methods. 
Recall the Duhammel's form of the equation \eqref{E:ODE ETD}: 
\begin{align}\label{E:ODE Du}
u^{m+1}=e^{\mathbf{L}\tau}u^{m}+  \int_0^{\tau} e^{\mathbf{L} (\tau -s)} \mathbf{N}(u(t_m+s)) ds.
\end{align}
The linear stiff part $\mathbf{L}u$ could be solved exactly as $e^{\mathbf{L}\tau}u^{m}$ from proper spatial discretizations (e.g., Fourier pseudo-spectral method), and the numerical quadrature can be constructed to approximate the nonlinear part in \eqref{E:ODE Du}.
One possible fourth order scheme is proposed by Cox and Matthews (known as modified ETDRK4 or mETDRK4), which gives the following formulas:
\begin{align}\label{E:ETDRK}
&a_m=e^{\mathbf{L} \tau/2}u^m+ \mathbf{L}^{-1} (e^{\mathbf{L} \tau/2} - \mathbf{I})\mathbf{N}(u^m); \nonumber \\
&b_m=e^{\mathbf{L} \tau/2}u^m+ \mathbf{L}^{-1} (e^{\mathbf{L} \tau /2} - \mathbf{I})\mathbf{N}(a_m); \nonumber \\
&c_m=e^{\mathbf{L} \tau/2}a_m+ \mathbf{L}^{-1} (e^{\mathbf{L} \tau /2} - \mathbf{I})(2\mathbf{N}(b_m)-\mathbf{N}(u^m)); \nonumber \\
&\mathbf{g}_1=\tau^{-2} \mathbf{L}^{-3} [-4-\tau \mathbf{L} + e^{\tau \mathbf{L}}(4-3\tau \mathbf{L} +(\tau \mathbf{L})^2 ) ]; \nonumber \\
&\mathbf{g}_2=2\tau^{-2} \mathbf{L}^{-3} [2+\tau \mathbf{L} + e^{\tau \mathbf{L}}(-2+\tau \mathbf{L} ) ];  \nonumber \\
&\mathbf{g}_3=\tau^{-2} \mathbf{L}^{-3} [-4-3 \tau \mathbf{L} - (\tau \mathbf{L})^2+ e^{\tau \mathbf{L}}(4-\tau \mathbf{L}) ]; \nonumber \\
&u^{m+1}=e^{\tau \mathbf{L}} u^m +\mathbf{g}_1\mathbf{N}(u^m)+\mathbf{g}_2[\mathbf{N}(a_m)+\mathbf{N}(b_m)]+\mathbf{g}_3\mathbf{N}(c_m),
\end{align}
where $\mathbf{I}$ is the identity operator.
The potential singularity of the term $\mathbf{L}^{-1} (e^{\mathbf{L} \Delta t/2} - \mathbf{I})$ at the $0$th Fourier node can be resolved by using the contour integrals from \cite{KT05}.

The mETDRK scheme \eqref{E:ETDRK} is an explicit scheme with 4th order accuracy, and thus, very efficient.
For the gKdV case in this paper, we notice that this scheme \eqref{E:ETDRK} still has the CFL condition restriction $ \tau \leq Const \cdot L/N $, since the first order derivative is shown in the nonlinear term. 
}

\subsection{Numerical examples}
{In this subsection, we list our numerical examples by using the proposed schemes. We denote the $s$-stage implicit Gaussian-Legendre Runge-Kutta methods in Table \ref{T:SRK conservation} as SAV-IRK2, SAV-IRK4 and SAV-IRK6, for $s=1,2,3$, respectively.

We first compare the proposed SAV-IRK4 method with the existing (MCN, SAV-LF, SS and mETDRK4) methods by simulating the breathers, which is non-disperse oscillatory pulse, for the modified KdV (mKdV) equations. We find that only the conservative methods (SAV-IRK4 and MCN) are capable of simulating these type of solutions for long time, and the SAV-IRK4 method is the most efficient one. 

Then, we switch to the widely considered soliton interaction solutions and scattering solutions in the next two examples. In these cases, while the invariant quantities $I_h^m$, $M_h^m$ and ${E}_h^m$ could be preserved well as shown in Theorem \ref{T:SRK conservation}, the 4th order mETDRK4 method is still the most efficient one, since it is an explicit method and no iteration is needed from solving the nonlinear system.

We track the the $L^{\infty}$ error $\mathcal{E}^m$, error of discrete momentum $\mathcal{E}_I^m$, discrete mass $\mathcal{E}^m_M$ and discrete (modified) energy $\mathcal{E}^m_E$ at $t=t_m$ as follows:   
}
\begin{align*}
&\mathcal{E}^m=\|u_{exact}^m-\mathbf{u}^m\|_{\infty}; \quad
\mathcal{E}_I^m=|I_h^m-I_h^0|; \\
&\mathcal{E}^m_M=\max_{j<m}|M_h^j-M_h^0|;  \quad
\mathcal{E}^m_{E}=\max_{j<m}|E_h^j-E_h^0|. 
\end{align*}

{
{\flushleft \it \underline{Example $1$}} We first consider the breathers for the modified KdV equation ($p=3$), which describe the non-dispersing oscillatory pulses. The two parameter family of exact solutions can be written as
\begin{align}\label{E:breathers}
B_{\alpha,\beta}(x,t)= & 2 \sqrt{6} \beta \sech(\beta(x+\gamma t)) \times \nonumber \\
& \left[ \frac{\cos(\alpha(x+\delta t))-(\beta/\alpha) \sin(\alpha(x+\delta t)) \tanh (\beta(x+\gamma t)}{1+(\beta/\alpha)^2 \sin^2(\alpha(x+\delta t)) \sech^2(\beta(x+\gamma t))} \right],
\end{align}
with $\gamma=3\alpha^2-\beta^2$, and $\delta=\alpha^2-3\beta^2$.
The phase velocity of the pulse is given by $\delta$, and the group velocity by $\gamma$ traveling to the left. The authors in \cite{AM2013} showed that breathers have the orbital stability. In \cite{CAV2013}, the numerical experiments show that the numerical instability may easily occur as time evolves and the numerical error accumulates, which eventually leads to the failure of simulation. Thus, extra care is needed in simulating this type of solutions, especially for the long time simulation.

In our numerical experiment, we take $u_0=B_{\alpha,\beta}(x,0)$, on the periodic domain $[-10\pi,10\pi]$ with $N=2^{10}$ nodes. We take $\alpha=3$ and $\beta=1$. The time step $\tau$ is taking to be as large as possible such that the breathers will still behave good at the stopping time $T=1000$, but will collapse if we take twice of it. This can be an indication of the stability for different numerical schemes. Surprisingly, the SAV-LF scheme does not work for simulating the breathers. In our numerical experiment, when we take $\tau=1e-4$, the solution from the SAV-LF scheme collapse around time $t=0.55$, and the solution collapse around $t=0.76$ if we take $\tau=1e-5$. Hence, the increasing mass error (see Proposition \ref{P:SAV LP}) from this scheme will lead to the instability to the breathers.

The top left subplot in Figure \ref{F:breathers 1} illustrates the solution profiles of the breather at different time on the periodic domain. The rest of the subplots in Figure \ref{F:breathers 1} show the conservation  of the discrete momentum, mass and energy (modified energy for SAV-IRK4). These results justify the conservation properties of the schemes proposed in Theorem \ref{T:SRK conservation} and Proportion \ref{P:MCN}. Moreover, the SS and mETDRK4 schemes also preserve the discrete momentum from our simulation. 

\begin{figure}
\includegraphics[width=0.45\textwidth]{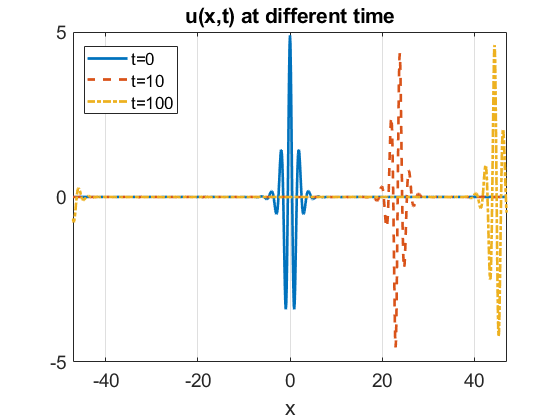}
\includegraphics[width=0.45\textwidth]{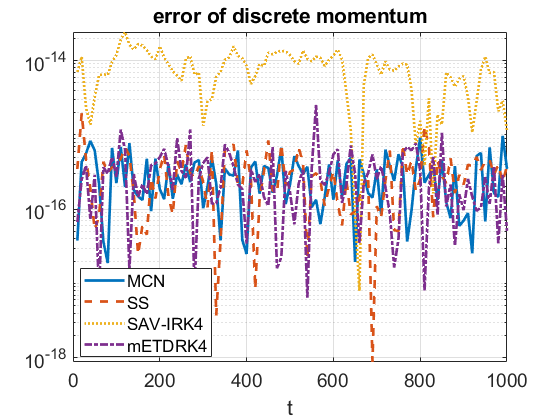}\\
\includegraphics[width=0.45\textwidth]{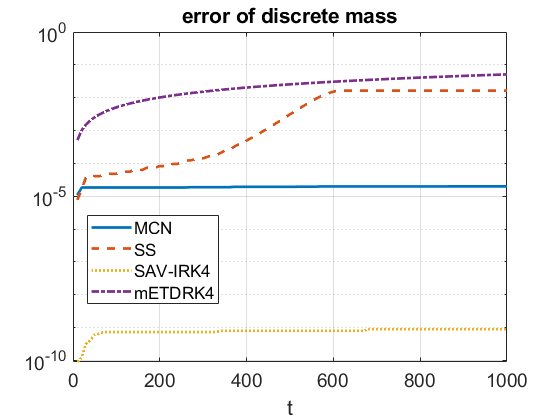}
\includegraphics[width=0.45\textwidth]{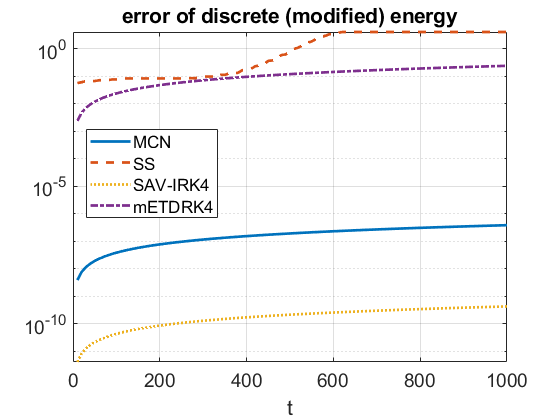}
\caption{\label{F:breathers 1} Top left: Breather profiles. The solution travels periodically on the domain $[-L,L]$. Top right: error of discrete momentum for from different time integrators. Bottom left: error of discrete mass. Bottom right: error of discrete energy (modified energy for SAV-IRK4).}
\end{figure}

From \cite{AM2013}, we have the relation $M[B]=2\beta M[Q]$, and $E[B]=2\beta \gamma |E[Q]|$, where $Q= \sqrt{6} \sech (x)$ being the soliton solution from the equation $$-Q_{xx}+Q-\frac{1}{3}Q^3=0.$$
The consistency can be checked by tracking the numerical quantities of $\beta$ and $\gamma$ with respect to time. That is, tracking 
$${\beta}_{num}^m=\frac{E^m_h}{2M[Q]} \quad \mbox{and} \quad {\gamma}_{num}^m=\frac{E^m_h}{2{\beta}_{num}^m |E[Q]|},$$
at different time $t=t_m$ and comparing with the inital given $\beta$ and $\gamma$. 
Note that the $E_h^m$ here for SAV-IRK scheme is the energy from the form \eqref{E:energy}, not the modified energy \eqref{E:energy sav}. 

\begin{figure}
\includegraphics[width=0.45\textwidth]{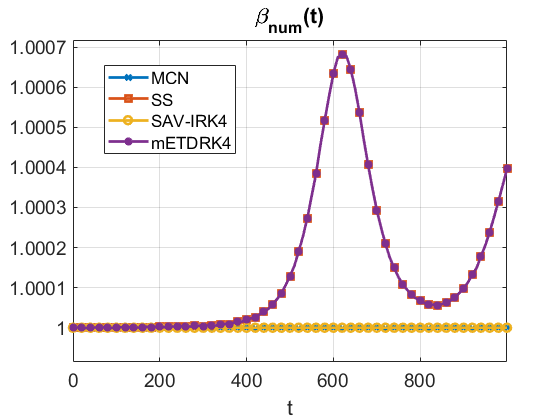}
\includegraphics[width=0.45\textwidth]{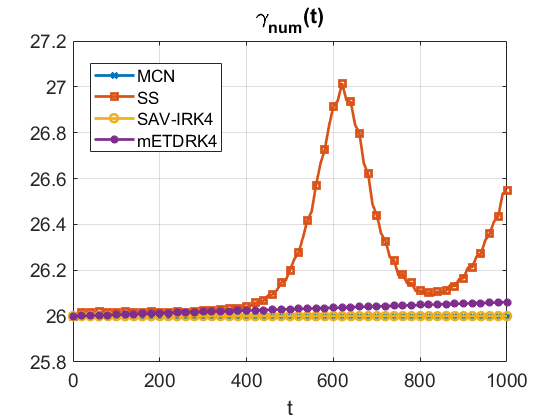}
\caption{\label{F:breathers beta} Left: $\beta_{num}(t)$ from different time integrators. Right: $\gamma_{num}(t)$ from different time integrators.}
\end{figure}

In Figure \ref{F:breathers beta}, there is no visual difference for the values of $\beta_{num}$ between SS and mETDRK4 methods, and so are for the conservative MCN and SAV-IRK4 methods, see the left subplot. When tracking $\gamma_{num}$, we observe that the SS method has the most deviation from the accurate value of $\gamma=26$. In the meanwhile, $\gamma_{num}$ from the mETDRK4 method shows a good agreement to $\gamma$, but the deviation increases linearly (see the purple circle line in the right subplot). However, the conservative methods, MCN and SAV-IRK4, keep close to the accurate value $\gamma$ all the time. 

Table \ref{T:breather} gives more details on our simulation. The simulation is processed via Matlab 2019b on the workstation with 18 cores Intel processor i9-10980XE. 
One can see that the SAV-IRK4 scheme is the most efficient one, since we can take the largest time step size $\tau=0.02$, and the quantities $\max_{m}(|\beta-\beta_{num}^m|)$ and $\max_{m}(|\gamma-\gamma_{num}^m|)$ still remain small ($\sim 10^{-11}$) at the end of simulation. While the mETDRK4 scheme (with small $\tau=0.0005$) takes less CPU time than the SAV-IRK4 scheme,  as shown in Figure \ref{F:breathers beta}, the $\max_{m}(|\gamma-\gamma_{num}^m|)=0.061$, which is relatively large and potential collapse may happen if extending the simulation time. 
Indeed, when we extend our simulation time to $T=2000$ with the same time step size in Table \ref{T:breather},   
the breather collapse from the mETDRK4 scheme (red dash), but still keeps its shape from the SAV-IRK4 method (blue solid) (see the left subplot in Figure \ref{F:breathers t2000}). In the right subplot of Figure \ref{F:breathers t2000}, the oscillation of $|\beta-\beta_{num}^m|$ and $|\gamma-\gamma_{num}^m|$ are within the range of $10^{-9}$, which suggests the possible cancellation of the numerical errors from the SAV-IRK4 scheme, other than the accumulation of the numerical errors from the mETDRK4 scheme which leads to the failure of the simulation.

\begin{center}
\begin{table}
\begin{tabular}{|c|c|c|c|c|}
\hline

  Method & $\tau$&  $\max_{m}(|\beta-\beta_{num}^m|)$  & $\max_{m}(|\gamma-\gamma_{num}|)$  & CPU time  \\
  \hline
 MCN & $2e-3$& $8e-7$ & $2e-5$ & $739.09$s  \\
 \hline
 SAV-LF& NA & NA & NA & NA   \\
 \hline
 SS& $2e-3$&$7e-4$ & $1.016$ & $882.78$s  \\
 \hline
 SAV-IRK4 & $2e-2$& $3e-11$ & $2e-11$& $389.96$s \\
 \hline
  mETDRK4 & $5e-4$& $7e-4$ & $0.061$& $287.68$s \\
 \hline
\end{tabular}
\caption{\label{T:breather} Error and computational time to $T=1000$ for simulation the breathers from different schemes.}
\end{table}
\end{center}


\begin{figure}
\includegraphics[width=0.45\textwidth]{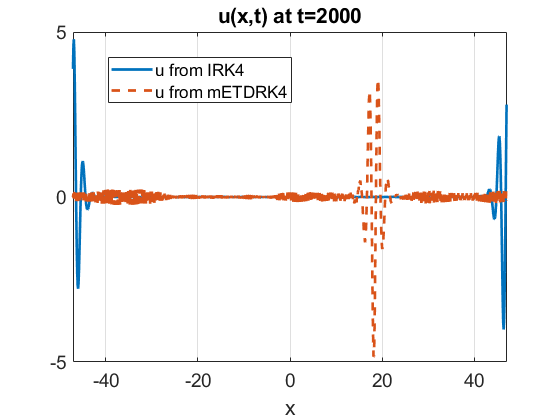}
\includegraphics[width=0.45\textwidth]{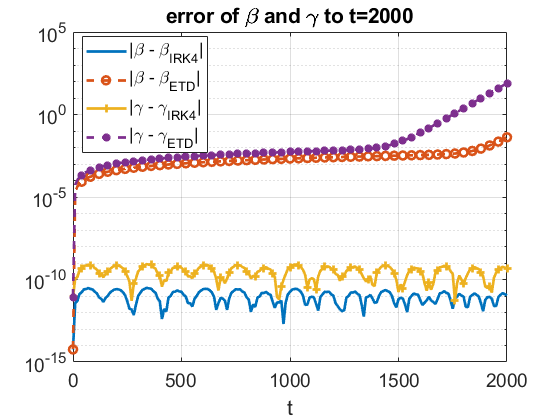}
\caption{\label{F:breathers t2000} Left: Solution profiles at $t=2000$ from SAV-IRK4 and mETDRK4. Right: $|\beta-\beta_{num}|$ and $|\gamma-\gamma_{num}|$ from different time integrators.}
\end{figure}

}

{
Our remaining examples consider the comparison between the IRK methods directly from \eqref{E:gKdV} and the SAV-IRK methods, showing that the conservative SAV-IRK approach will preserve the discrete momentum and energy as desired, regardless for the soliton type of solutions or the fast oscillating disperse soltuions. The results from the mETDRK4 method is also included as a comparison. 

\medskip

{\flushleft \it \underline{Example $2$}}. We consider the interaction of two soliton solutions to the KdV ($p=2$) equation. }
The exact solution satisfies
\begin{align}\label{E:u ex1}
u(x,t)=12\frac{\gamma_1^2e^{\theta_1}+\gamma_2^2e^{\theta_2}+2(\gamma_2-\gamma_1)^2 e^{\theta_1+\theta_2}+a^2(\gamma_2^2e^{\theta_1}+\gamma_1^2e^{\theta_2} ) e^{\theta_1+\theta_2}}{(1+e^{\theta_1}+e^{\theta_2}+a^2e^{\theta_1+\theta_2})^2},
\end{align}
where we take the parameters
\begin{align*}
&\gamma_1=0.4, \,\, \gamma_2=0.6, \,\, a^2=\left( \frac{\gamma_1-\gamma_2}{\gamma_1+\gamma_2} \right)^2=\frac{1}{25}, \\
& \theta_1=\gamma_1x-\gamma_1^3t+x_1, \,\, \theta_2=\gamma_2 x-\gamma_2^3t+x_2, \,\, x_1=10, \,\, x_2=25.
\end{align*}

This initial condition represents two solitons, a larger one is on the left of the smaller one (see the left subplot in Figure \ref{F:EX1 profile}). Both of the solitons travel to the right. The larger soliton travels with faster speed, and thus, will catch up with the smaller one. Then, they are supposed to merge and split again. 
We take $L=30\pi$ with $N=2048$ in space. The time step is taken to be $\tau=0.1$  and the stopping time $T=200$. Figure \ref{F:EX1 profile} shows the solution profile obtained by the SAV-IRK4 method. The left subplot is the initial condition $u_0$, the right subplot is the time evolution. One can see that the solitons travel to the right with different speeds, intersect, and then split again with their initial speeds. This is similar to the result in \cite{YHL2013}. 

\begin{figure}
\includegraphics[width=0.45\textwidth]{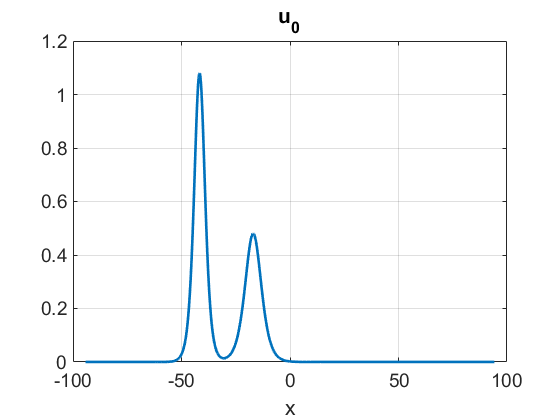}
\includegraphics[width=0.45\textwidth]{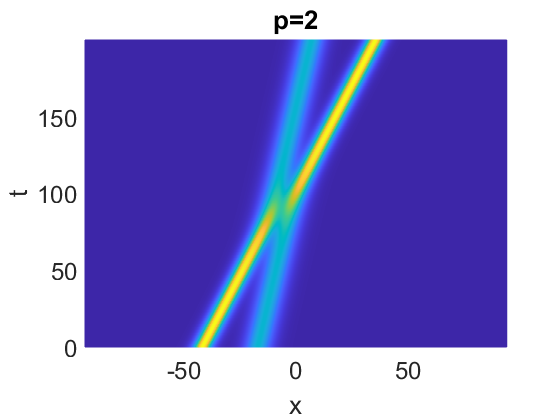}
\caption{\label{F:EX1 profile} The solution profile for Example 2 from SAV-IRK4. Left: $u_0$. Right: $u(x,t)$ from the top view.}
\end{figure}


Figure \ref{F:EX1 data} tracks the results obtained by different time integrators. The top left subplot in Figure \ref{F:EX1 data} shows $\|\mathbf{u}^m-u_{exact}\|_{\infty}$ with respect to time, where $u_{exact}=u(\mathbf{x},t_m)$ is the exact solution. One can see that the error by using the mETDRK4 method is between the error by IRK2 type methods (IRK2 and SAV-IRK4) and IRK4 type methods (IRK4 and SAV-IRK4). This is reasonable, since IRK2 type methods are of the second order accuracy in time, whereas the rest three are of the 4th order accuracy in time. { The IRK4 type methods are more accurate in simulating the soliton interactions at the same time step level. However, IRK4 type methods are implicit methods which require iterations at each time step. For example, given the same time step size, the mETDRK4 method is usually $10$ to $40$ times faster than the SAV-IRK4 scheme. Hence, for this soliton type of solutions, choosing smaller time step for the mETDRK4 method will generate more accurate results than the IRK4 methods in shorter actual computational time.}
The two IRK2 methods (IRK2 and SAV-IRK2), and the two IRK4 methods (IRK4 and SAV-IRK4) are indistinguishable from each other in the subplot. {In fact, their $L^{\infty}$ difference are on the order of $10^{-4}$ for IRK2 methods, and $10^{-9}$ for IRK4 methods.}

{
The rest of the subplots, from the top right to the bottom right  in Figure \ref{F:EX1 data}, track the errors for the discrete momentum ($I_h^m$), discrete mass ($M_h^m$) and discrete energy ($E_h^m$ from \eqref{E:energy}) from IRK and mETDRK4 methods or the modified discrete energy ($E_h^m$ from \eqref{E:energy sav}) from SAV-IRK methods. The conservation of the $I_h^m$ and modified energy ${E}_h^m$ show the good agreement to the Theorem \ref{T:SRK conservation}. We also observe that in this case, error of discrete mass in Theorem \ref{T:DI DE DM error} is very small, even smaller than our fixed point solver tolerance ($10^{-12}$), where the theoretical conservation could not be provided in Theorem \ref{T:SRK conservation}.
}

\begin{figure}
\includegraphics[width=0.45\textwidth]{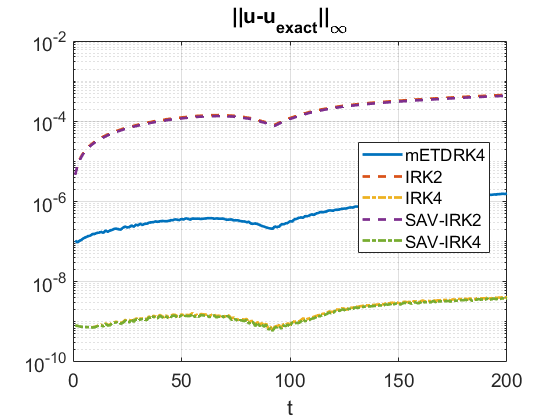}
\includegraphics[width=0.45\textwidth]{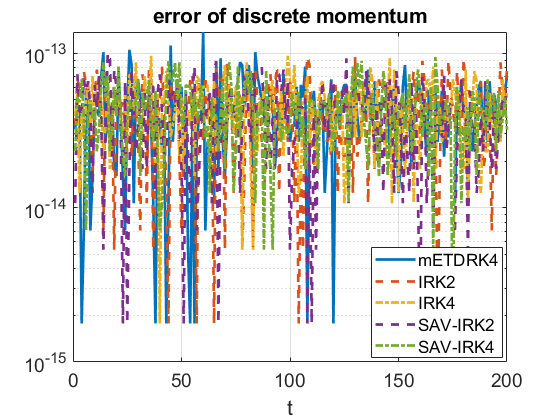}\\
\includegraphics[width=0.45\textwidth]{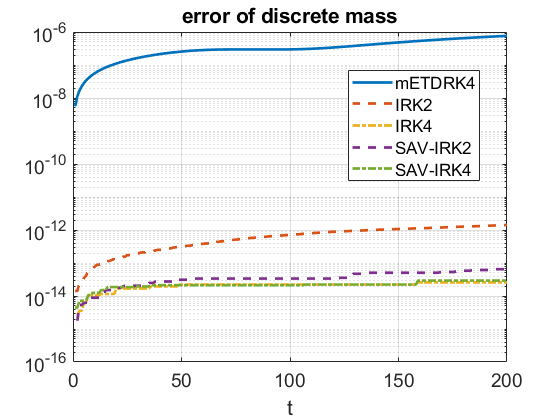}
\includegraphics[width=0.45\textwidth]{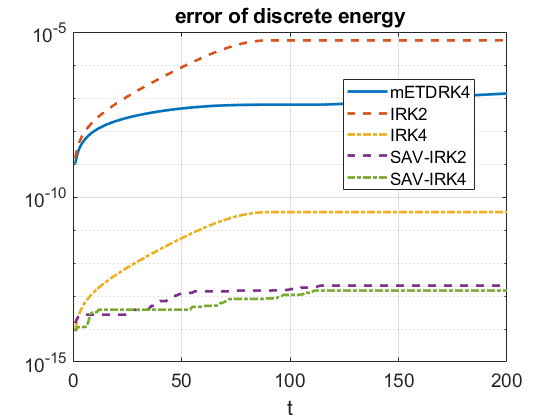}
\caption{\label{F:EX1 data} The errors in Example 2 by different time integrators: mETDRK4 (solid blue); IRK2(dash red); SAV-IRK2 (dash purple); IRK4 (dash dot orange); SAV-IRK4 (dash dot green). Top left: $\|u-u_{exact}\|_{\infty}$. Top right: discrete momentum error. Bottom left: discrete mass error. Bottom right: discrete energy error. }
\end{figure}


We also list the $L^{\infty}$ error $\mathcal{E}^m$ for the SAV-IRK2 and SAV-IRK4 methods from different time steps $\tau$ in Table \ref{T:EX1 error}. { The rate stands for the ratio of the error $\mathcal{E}^m$ in the previous row divide the error $\mathcal{E}^m$ in the current row, and etc.} 
One can see that the error for SAV-IRK2 method decreases at the second order speed, and the error for SAV-IRK4 method decreases at the 4th order speed, except for $\tau=\frac{1}{40}$, where the temporal error is too close to the error of solving the nonlinear system, and thus, affects the final results. Therefore, the SAV-IRK2 and SAV-IRK4 are still of the 2nd order and the 4th order temporal accuracy, respectively. { Moreover, by taking $\tau=\frac{1}{160}$ for the mETDRK4 scheme, we obtain the more accurate result ($\mathcal{E}^m \approx 5e-12$) in shorter computational time ($\sim 9.27$s).}


\begin{center}
\begin{table}
\begin{tabular}{|c|c|c|c|c|c|c|}
\hline
 & \multicolumn{3}{|c|}{SAV-IRK2} &  \multicolumn{3}{|c|}{SAV-IRK4}\\
\hline
  $\tau$&  error  & rate  & CPU time  & error  & rate & CPU time\\
  \hline
 $\frac{1}{5}$& $1.7e-3$ & NA & $6.579$s & $6.40e-8$ & NA & $4.949$s \\
 \hline
 $\frac{1}{10}$& $4.3e-4$& $3.998$ & $13.966$s & $3.89e-9$ &  $16.47$ & $12.016$s  \\
 \hline
 $\frac{1}{20}$&$1.1e-4$ & $3.998$ & $22.787$s & $2.47e-10$ & $15.75$&  $21.944$s \\
 \hline
 $\frac{1}{40}$& $2.72e-5$ & $4.001$& $34.330$s & $1.78e-11$ & $13.82$ & $38.257$s \\
 \hline
\end{tabular}
\caption{\label{T:EX1 error} The convergence rates of SAV-IRK2 and SAV-IRK4 in Example 2. We note that by using the mETDRK4, we can obtain the error $\approx 5e-12$ (more accurate) in $\sim 9.27$ seconds (faster).}
\end{table}
\end{center}


{\flushleft \it \underline{Example $3$}}. We next consider the scattering solution for the KdV equation with the initial condition $u_0=-\sech^2(x)$. 
This example is a soliton type data with negative sign. The KdV evolution will lead $u_0$ to the dispersion, and possible negative value for $\int u^3 dx$. Here, the $C_0$ adjustment process in \eqref{E:v new} will make $v(t)$ stay positive, and thus, will keep the algorithm applicable for all times. 
The exact solution is not explicitly given, since it is not exactly the soliton due to the negative sign and coefficients chosen. We compute the reference solution $u_{ref}$ by both mETDRK4 and SAV-IRK4 methods separately with an ultimately small time step ($\tau=1/25600$). 
The results from these two different approaches differ at the level of $10^{-12}$.  Therefore, we can take the reference solution as the ``exact" solution.
In the simulation, we take $\tau=0.01$, with $L=30 \pi$ and $N=2048$, the same as in the previous example.

The left subplot in Figure \ref{F:EX2 profile} shows the solution profile evolving in time. One can see that it disperses to the left. The right subplot shows the solution profile at the stopping time $T=1$. The fast oscillations on the left can be observed. Figure \ref{F:EX2 data} tracks the data $\mathcal{E}^m$, $\mathcal{E}_I^m$, $\mathcal{E}_M^m$ and $\mathcal{E}_E^m$ that we are interested in. { We conclude that for this kind of scattering data, the mETDRK4 method is still the most efficient since it produces the smallest $L^{\infty}$ error in the shortest computational time.}

The top right subplot in Figure \ref{F:EX2 data} shows that the momentum is conserved for all these five approaches. The bottom subplots track the errors of mass and energy. The results are as expected and similar to the Example 2. 

\begin{figure}
\includegraphics[width=0.45\textwidth]{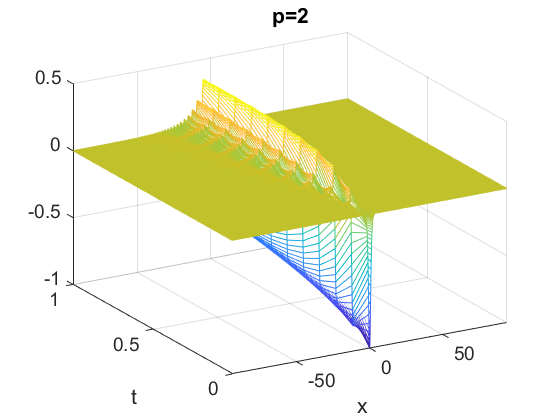}
\includegraphics[width=0.45\textwidth]{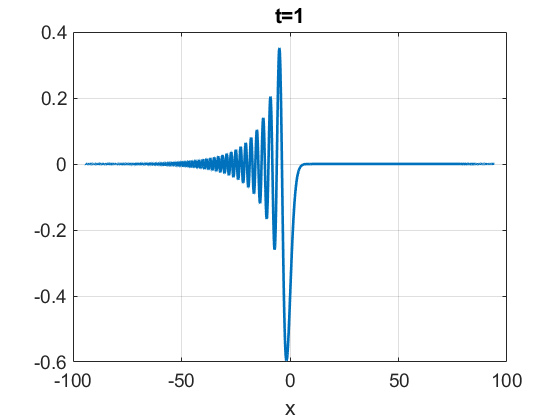}
\caption{\label{F:EX2 profile} The solution profile in Example 3 from SAV-IRK4. Left: $u(x,t)$. Right: $u(x,t)$ at $t=1$.}
\end{figure}

\begin{figure}
\includegraphics[width=0.45\textwidth]{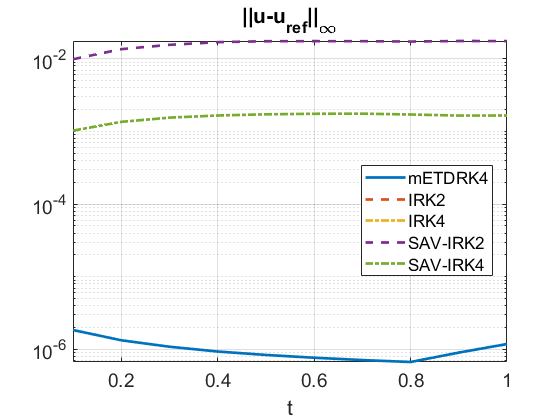}
\includegraphics[width=0.45\textwidth]{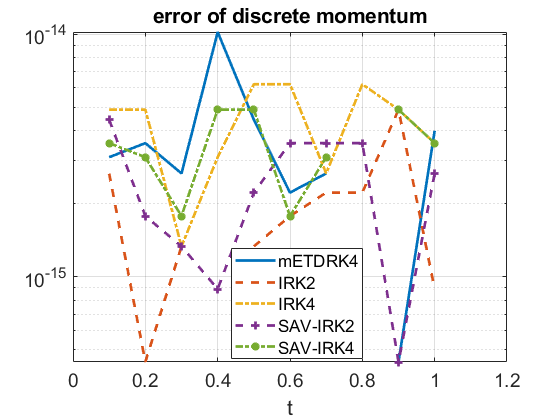}\\
\includegraphics[width=0.45\textwidth]{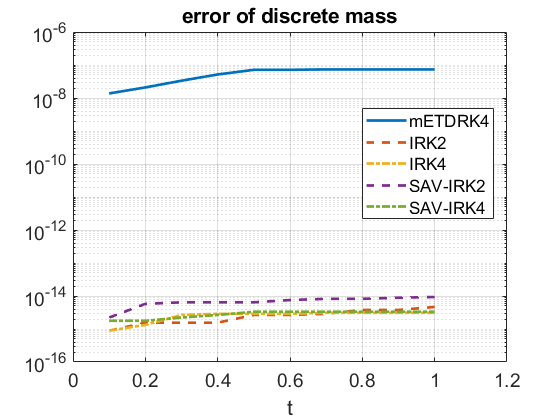}
\includegraphics[width=0.45\textwidth]{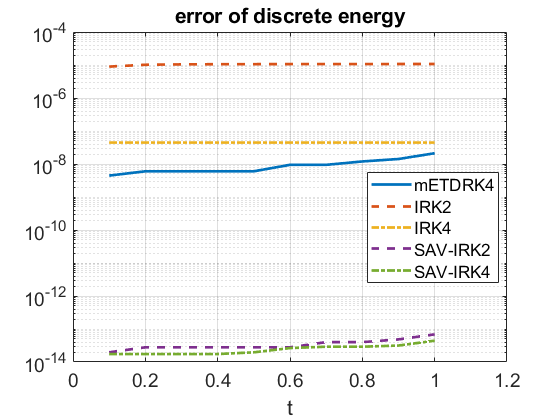}
\caption{\label{F:EX2 data} The errors for Example 3 by different time integrators: mETDEK4 (solid blue); IRK2(dash red, coincides with SAV-IRK2); SAV-IRK2 (dash purple); IRK4 (dash dot orange, coincides with SAV-IRK4); SAV-IRK4 (dash dot green). Top left: $\|u-u_{ref}\|_{\infty}$. Top right: discrete momentum error. Bottom left: discrete mass error. Bottom right: discrete energy error. }
\end{figure}

Table \ref{T:EX2 error1} shows the convergence rates for mETDRK4, SAV-IRK2 and SAV-IRK4. From Table \ref{T:EX2 error1}, we find that the convergence rates is lower than the expected rate for all these approaches. However, as the time step $\tau$ decreases, the rates are approaching their theoretical values ($4$ for the 2nd order methods and $16$ for the 4th order methods). The sub-convergence rate is most likely caused by the fast oscillations of the solution as well as insufficiently small $\tau$.


\begin{center}
\begin{table}
\begin{tabular}{|c|c|c|c|c|c|c|}
\hline
 & \multicolumn{2}{|c|}{mETDRK4} &\multicolumn{2}{|c|}{SAV-IRK2} & \multicolumn{2}{|c|}{SAV-IRK4}\\
\hline
  $\tau$& error & rate &   error  & rate & error  & rate\\
  \hline
 $\frac{1}{100}$& $1.85e-6$ & NA & $1.76e-2$ & NA &$1.76e-3$ & NA  \\
 \hline
 $\frac{1}{200}$& $1.34e-7$ & $13.81$  &$6.38e-3$& $2.76$&$2.85e-4$ &  $6.18$ \\
 \hline
 $\frac{1}{400}$& $1.12e-8$ & $11.96$  &$2.09e-3$ & $3.04$ &$3.74e-5$ & $7.63$ \\
 \hline
 $\frac{1}{800}$& $7.74e-10$ & $14.45$ &$6.01e-4$ & $3.49$ &$3.63e-6$ & $10.28$  \\
 \hline
\end{tabular}
\caption{\label{T:EX2 error1} The convergence rates of mETDRK4, SAV-IRK2 and SAV-IRK4 in Example 3.}
\end{table}
\end{center}

\section{Conclusion}
We propose a numerical scheme for the generalized KdV equations, which can preserve all the three invariant quantities in the discrete time flow with an arbitrarily high order of temporal accuracy { from the the combination of the symplectic Runge-Kutta method and the scalar auxiliary variable reformulation. In fact, instead of the power nonlinearity $\frac{1}{p} u^p$, the general flux $f(u)$ can also be adapted to this algorithm, i.e., one can create the auxiliary variable $v=\sqrt{\mathcal{F}+C_0}$ such that $f=\frac{\delta \mathcal{F}}{\delta u}$. Then, the modified energy ${E}=\frac{1}{2}(u_{x},u_x)-v^2-C_0 $ will be conserved in the discrete SRK time flow.} 

In our numerical experiments, besides the fulfillment of conservation laws for our proposed numerical methods, {we also show that among the popular existing time integrators, the mETDRK4 method is likely to be the most efficient one for the soliton type and the scattering type of solutions. However, for the long time simulation on the oscillatory non-dispersing solutions, such as the breathers for the mKdV equations, the conservative methods are recommended. Our numerical simulations demonstrate the high accuracy and efficiency of the proposed schemes in these cases. 
}


\bibliographystyle{abbrv}
\bibliography{ref_KdVc}
\end{document}